\newlength{\bibitemsep}\setlength{\bibitemsep}{.10\baselineskip plus .05\baselineskip minus .05\baselineskip}
\newlength{\bibparskip}\setlength{\bibparskip}{0pt}
\let\oldthebibliography\thebibliography
\renewcommand\thebibliography[1]{%
  \oldthebibliography{#1}%
  \setlength{\parskip}{\bibitemsep}%
  \setlength{\itemsep}{\bibparskip}%
}
\newtheorem{theorem}{Theorem}[section]
\newtheorem{definition}[theorem]{Definition}
\newtheorem{corollary}[theorem]{Corollary}
\newtheorem{remark}[theorem]{Remark}
\newtheorem{example}[theorem]{Example}
\newtheorem{examples}[theorem]{Examples}
\newtheorem{foo}[theorem]{Remarks}
\DeclareMathOperator *{\osc}{osc}
\def\vint{\mathop{\mathchoice%
          {\setbox0\hbox{$\displaystyle\intop$}\kern 0.22\wd0%
           \vcenter{\hrule width 0.6\wd0}\kern -0.82\wd0}%
          {\setbox0\hbox{$\textstyle\intop$}\kern 0.2\wd0%
           \vcenter{\hrule width 0.6\wd0}\kern -0.8\wd0}%
          {\setbox0\hbox{$\scriptstyle\intop$}\kern 0.2\wd0%
           \vcenter{\hrule width 0.6\wd0}\kern -0.8\wd0}%
          {\setbox0\hbox{$\scriptscriptstyle\intop$}\kern 0.2\wd0%
           \vcenter{\hrule width 0.6\wd0}\kern -0.8\wd0}}%
          \mathopen{}\int}
\newcommand{\sk}[2]{\langle #1 , #2\rangle}
\begin{document}

\title{The $p$-ellipticity condition for second order elliptic systems and applications to the Lam\'e and homogenization problems}
\author{Martin Dindo\v{s}}
\affil{School of Mathematics\\
        The University of Edinburgh and Maxwell Institute of Mathematical Sciences, UK}

\author{Jungang Li}
\affil{Department of Mathematics\\ 
	Brown University, USA}

\author{Jill Pipher}
\affil{Department of Mathematics\\ 
	Brown University, USA}

\maketitle
 
\begin{abstract} The notion of {\it $p$-ellipticity} has recently played a significant role in improving our understanding of issues of solvability of boundary value problems for scalar complex valued elliptic PDEs. In particular, the presence of $p$-ellipticity ensures higher regularity of solutions of such equations.
  
In this work we extend the notion of  $p$-ellipticity to second order elliptic systems. Recall that for systems, there is no single notion of {\it ellipticity}, rather a more complicated picture emerges with ellipticity conditions of varying strength such as the Legendre, Legendre-Hadamard and integral conditions. A similar picture emerges when $p$-ellipticity is considered. In this paper, we define three new notions of $p$-ellipticity, establish relationships between them and show that each of them does play an important role in solving boundary value problems. 

These important roles are demonstrated by establishing extrapolation results for solvability of the $L^p$ Dirichlet problem for elliptic systems, followed by applications of this result in two different scenarios: one for the Lam\'e system of linear elasticity and another in the theory of homogenization. 
  
\end{abstract}

\section{Introduction}

This paper continues the exploration of a recently discovered structural condition for matrices that plays a key role in the solvability of boundary problems for divergence form elliptic equations associated to matrices with non-smooth coefficients. This condition, $p$-ellipticity, was introduced independently in \cite{DP} and \cite{CD}, and investigated for its role in
two different problems concerning complex valued divergence form operators.  When $p=2$, $p$-ellipticity coincides with the classical  {\it ellipticity} property, essential for the theory of (real) second order partial differential equations. In the previous literature, the $p$-ellipticity condition has been shown to be significant in the study of higher regularity of solutions to complex valued second order divergence form operators (\cite{DP} \cite{DP3}, \cite{FMZ}). In particular, it was shown in \cite{DP2} that the solvability of the Dirichlet problem with boundary data in $L^q$ can be extrapolated from a specific value of $q$ to higher values depending on the range of $p$-ellipticity. This latter fact is remarkable given that there is no maximum principle for complex coefficient equations, which is the easy avenue to extrapolating solvability of the Dirichlet problem in the real coefficient setting.

We formulate here several possible extensions of this condition for second order real elliptic systems: a {\it strong} pointwise condition, an {\it integral} condition, and a {\it weak} pointwise condition that can be compared to the Legendre-Hadamard condition. As was the case for scalar complex coefficient equations, our formulations of $p$-ellipticity for systems of equations are strengthened conditions considered by Cialdea and Maz'ya (see \cite{CM17}) in the context of their work on $L^p$ dissipativity.  See also \cite{C10}, \cite{CM05} and \cite{CM06}.
 
Consider an open subset $\Omega\subset {\mathbb R}^n$. The second order systems we consider may also have lower order terms and can be written in the form:

\begin{equation}\label{eq:dopS}
(\mathcal Lu)_\alpha=\partial_h(A^{hk}_{\alpha\beta}(x)\partial_k u^\beta)+B^h_{\alpha\beta}(x)\partial_hu^\beta,\quad \alpha=1,2,\dots, m;\quad\mbox{for }u:\Omega\to\mathbb C^m,
\end{equation}
where here and in what follows we shall use the Einstein convention summing over repeating indices.
When we write $\nabla$ we shall mean the gradient $(\partial_{x_1},\partial_{x_2},\dots,\partial_{x_n})$. Our indices will usually run as follows: $h,k=1,2,\dots,n$ and $\alpha,\beta=1,2,\dots,m$. Here $n$ stands for the underlying dimension of our domain $\Omega\subset {\mathbb R}^n$ and $m$ denotes the dimension of the  vector valued function $u:\Omega\to{\mathbb R}^m$.\vglue2mm

We shall say that $u:\Omega\to{\mathbb R}^m$ is the weak solution of \eqref{eq:dopS} if the sesqilinear form 
${\mathcal B}$ associated with our equation defined by \eqref{eq:sesq} vanishes for all $v\in C_0^\infty(\Omega;\mathbb C^m)$, i.e., $\mathcal B(u,v)=0$ for all such $v$.\vglue2mm

By way of background, recall that for systems of equations, there are at least three different notions of classical ellipticity of the tensor $A=(A^{hk}_{\alpha\beta})$.

Given a coefficient tensor $A$ with bounded measurable entries defined in $\Omega$, 
$A$ is said to be {\it strongly elliptic} if there exists a constant $C>0$ such that
\begin{equation}\label{EllipL}
\Re e\sk{A(x)\xi}{\xi}_{{\mathbb C}^{n\times m}}=\Re e \left(A^{hk}_{\alpha\beta}(x)\xi_{h}^{\alpha}\overline{\xi_{k}^{\beta}}\right)\ge C|\xi|^2
\end{equation}
for all $\xi=(\xi^{\alpha}_{h})\in{\mathbb C}^{n\times m}$ and a.e. $x\in\Omega$.  Strong ellipticity, \eqref{EllipL},  is traditionally
referred to as the {\bf Legendre} condition. It is the {\bf strongest} form of ellipticity, and when true  
it is usually relatively easy to verify, since it must hold pointwise.  

In particular, it follows from \eqref{EllipL}, via integration, that for any $v\in W^{1,2}_0(\Omega,\mathbb C^m)$ we have:
\begin{equation}\label{EllipIC}
\Re e\int_\Omega \langle A \nabla v , \nabla v \rangle\, dx=\Re e \int_{\Omega}
A^{hk}_{\alpha\beta}(x)\partial_kv^\beta\overline{\partial_hv^\alpha}\,dx\ge C\int_{\Omega}|\nabla v|^2dx.
\end{equation} 
This {\it integral condition} of ellipticity is the starting point for the Lax-Milgram lemma which
 allows one to find weak solutions.

Finally,  the {\bf weakest} form of ellipticity is the 
 {\bf Legendre-Hadamard} condition:
\begin{equation}\label{EllipLH} \Re e\left\langle
(A^{hk}(x)q_h {q_k)}\eta,\eta\right\rangle=
\Re e\left(
A^{hk}_{\alpha\beta}(x)\eta^\alpha \overline{\eta^\beta} q_h {q_k}\right)\ge \lambda |\eta|^{2}|q|^2
\end{equation}
for all $\eta=(\eta^{\alpha})\in{\mathbb C}^{m}$, $q=(q_h)\in{\mathbb R}^{n}$, and a.e.  $x\in\Omega$.

It is always the case that \eqref{EllipIC} implies \eqref{EllipLH}, and all three conditions are equivalent when the operator is scalar ($m=1$) and real-valued. Furthermore, if the coefficients of $A$ are uniformly continuous (or, in 
the case that $\Omega$ is a bounded domain, we only need $ A_{\alpha \beta}^{hk} \in C(\overline{\Omega})$), then (c.f. \cite{Y}) condition \eqref{EllipLH} implies something similar to \eqref{EllipIC}, namely a G{\aa}rding-type integral inequality

\begin{equation}\label{EllipICw}
\Re e\int_\Omega \langle A \nabla v , \nabla v \rangle\, dx+ M\int_{\Omega}|v|^2dx \ge C\int_{\Omega}|\nabla v|^2dx,
\end{equation}
for all $v\in W^{1,2}_0(\Omega,\mathbb C^m)$ and sufficiently large $M=M(A,\Omega)>0$.
\vglue2mm

In this paper, we introduce three new notions of $p$-ellipticity for systems that are analogues of these three classical notions of ellipticity. In particular,
when $p =2$ they coincide with the three classical notions of ellipticity for systems.
Our first main theorem relates the three new properties of $p$-ellipticity introduced in this paper; see Section \ref{two} for 
the precise definitions.

\begin{theorem}\label{p-ellipticity AT} Let $\Omega\subset{\mathbb R}^n$ be open and $A=(A_{\alpha\beta}^{hk}(x)): \Omega\to \mathbb C^{n\times m}$ be a bounded tensor-valued function. Then the following statements hold.
\begin{itemize}
\item[(i)] For any $p\in (1,\infty)$ the strong $p$-ellipticity condition \eqref{EllipLp} implies the integral $p$-ellipticity condition \eqref{eq:pellintp}.
\item[(ii)] For any $p\in (1,\infty)$ the integral ellipticity condition \eqref{eq:pellintp} implies the weak $p$-ellipticity condition \eqref{EllipLHp}.
\item[(iii)] When $p=2$ the condition \eqref{EllipLp} is just the usual strong ellipticity condition \eqref{EllipL}, the condition \eqref{eq:pellintp} is the usual integral ellipticity condition \eqref{EllipIC} and the condition 
 \eqref{EllipLHp} is just the Legendre-Hadamard condition  \eqref{EllipLH}.
\item[(iv)] Given any $p\in (1,\infty)$ if $A$ satisfies one of the conditions  \eqref{EllipLp}, \eqref{eq:pellintp} or 
\eqref{EllipLHp} then $A^*$ satisfy the same $p'$-ellipticity condition. If $A$ satisfies condition 
 \eqref{EllipLHp} for some  $p \in (1, \infty)$ then it also satisfies  \eqref{EllipLHp} for $p=2$, and thus the Legendre-Hadamard condition  \eqref{EllipLH}.
\item[(v)] If $A$ satisfies one of the conditions  \eqref{EllipLp}, \eqref{eq:pellintp} or 
\eqref{EllipLHp} for both $p$ and $q$ from the interval $(1,\infty)$ then $A$ satisfies the same condition 
for any $r$ from an open interval containing the points $p$ and $q$.
 \end{itemize}
 \end{theorem}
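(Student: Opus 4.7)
The plan is to address items (iii) and (i) first as essentially bookkeeping, then items (ii) and (iv) by standard analytic devices, and finally item (v), which contains the main novelty. Item (iii) is a direct inspection: each of the three new conditions \eqref{EllipLp}, \eqref{eq:pellintp}, \eqref{EllipLHp} differs from its classical analogue by a ``$p$-correction'' term proportional to $(1-2/p)$, which vanishes at $p=2$. For item (i), the analogue of \eqref{EllipL}$\Rightarrow$\eqref{EllipIC}, I would substitute into the pointwise strong $p$-ellipticity the $|v|^{p-2}$-weighted deformation of $\nabla v$ dictated by the integrand of \eqref{eq:pellintp}, integrate over $\Omega$, and regularize $|v|^{p-2}$ near $\{v=0\}$ by the standard device $|v|\to(|v|^2+\eps)^{1/2}$ before passing to the limit $\eps\downarrow 0$.

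For item (ii), I would use oscillating plane-wave test functions $v_\tau(x)=\phi(x)\eta\,e^{i\tau q\cdot x}$ with $\phi\in C_0^\infty(\Omega)$, $\eta\in\mathbb C^m$ and $q\in\mathbb R^n$. Since $\nabla v_\tau=i\tau\phi(x)(\eta\otimes q)e^{i\tau q\cdot x}+O(1)$, inserting $v_\tau$ into \eqref{eq:pellintp}, dividing by $\tau^2$, and letting $\tau\to\infty$ picks out the leading rank-one polarization $\eta\otimes q$ and yields an integral inequality whose integrand depends on $x$ only through the coefficients; localizing $\phi$ at an arbitrary Lebesgue point of $A$ then produces \eqref{EllipLHp} a.e. For item (iv), the duality $(A,p)\leftrightarrow(A^*,p')$ follows from the adjoint identity $\sk{A\xi}{J_p\xi}=\sk{A^*J_p\xi}{\xi}$ together with the arithmetic identity $1-2/p=-(1-2/p')$: the change of variable $\eta=J_p\xi$ converts the $p$-ellipticity of $A$ into the $p'$-ellipticity of $A^*$, and the same substitution handles the integral and weak versions. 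For the second assertion, \eqref{EllipLHp} reduces, for each fixed $q$, to a complex matrix $p$-ellipticity condition on $M(x,q):=A^{hk}(x)q_hq_k$; combining this with the $p'$-ellipticity of $M^*$ (which is just the first part of (iv) applied to $A$) and using that $J_p+J_{p'}$ equals $2I$ on $\mathbb C^m$, a symmetrization cancels the $(1-2/p)$-correction and leaves precisely \eqref{EllipL}, hence the Legendre--Hadamard condition \eqref{EllipLH}.

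The main obstacle is item (v). The crucial structural observation is that each of the three inequalities \eqref{EllipLp}, \eqref{eq:pellintp}, \eqref{EllipLHp} is \emph{affine} in the single parameter $t=1-2/p$, because the deformation operator $J_t=I+tK$ (with $K$ denoting complex conjugation on the relevant space of test objects) is affine in $t$. Hence if $A$ satisfies the condition at $t_p$ and $t_q$ with constants $c_p,c_q>0$, then for every $t=\lambda t_p+(1-\lambda)t_q$ with $\lambda\in[0,1]$ the corresponding convex combination of the two inequalities gives the condition at $t$ with constant at least $\min(c_p,c_q)>0$. To extend to an \emph{open} interval strictly containing $[\min(t_p,t_q),\max(t_p,t_q)]$, I would use the elementary perturbation estimate $|\Re\sk{A\xi}{J_t\xi}-\Re\sk{A\xi}{J_{t_p}\xi}|\leq|t-t_p|\,\|A\|_\infty|\xi|^2$, uniform on the unit sphere of $\xi$; this keeps the ellipticity constant positive for $t$ in an open neighborhood of each endpoint, and the same perturbation argument works verbatim for the integral and weak variants, where the role of the unit sphere is played by normalized test functions $v$ or normalized pairs $(\eta,q)$.
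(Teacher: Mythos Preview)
Your treatment of (i), (iii), the duality half of (iv), and the openness part of (v) is fine and matches the paper. There are, however, two genuine gaps.

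\textbf{Item (ii).} Your oscillating plane-wave test function $v_\tau(x)=\phi(x)\eta\,e^{i\tau q\cdot x}$ does not see the $p$-correction. Since $|v_\tau|=|\phi|\,|\eta|$ is independent of $\tau$, the term $\frac{v_\tau}{|v_\tau|}\nabla|v_\tau|$ is $O(1)$ while $\nabla v_\tau$ is $O(\tau)$; after dividing by $\tau^2$ and sending $\tau\to\infty$ the entire $(1-2/p)$-deformation disappears and you recover only the classical Legendre--Hadamard condition \eqref{EllipLH}, not \eqref{EllipLHp}. In particular the free unit vector $\omega$ in \eqref{EllipLHp} never enters your limit. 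The paper's argument is substantially more delicate: one first freezes $x$, then performs a dimension reduction to $n=2$, then uses the Cialdea--Maz'ya test functions $\psi_R(x)=(\mu\omega+\varphi(x))\,\eta(\log|x|/\log R)$ and sends $R\to\infty$ followed by $\mu\to\infty$; it is the latter limit that decouples $\omega$ from the direction of $v/|v|$ and produces the $\omega$-dependent deformation. Only then does a sawtooth (not exponential) oscillation extract the $q$-direction. The two-dimensional reduction is not cosmetic: the $R\to\infty$ limit relies on a logarithmic capacity estimate that is specific to the plane.

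\textbf{Item (v).} The structural observation you invoke is false for the system conditions as defined here. The left-hand sides of \eqref{EllipLp}, \eqref{eq:pellintp}, \eqref{EllipLHp} have the form $\Re e\langle A(\xi-t\,\xi(\omega)),\,\xi+t\,\xi(\omega)\rangle$ with $t=1-2/p$, which is \emph{quadratic} in $t$, not affine; you are importing the scalar formulation \eqref{eq:pellCD}, where only one slot is deformed. The paper's interpolation argument instead observes that the quadratic has leading coefficient $-\Re e\langle A\,\xi(\omega),\xi(\omega)\rangle$, which is nonpositive (this follows from the $p$-ellipticity itself applied with $\xi$ replaced by $\xi(\omega)$, since $\xi(\omega)(\omega)=\xi(\omega)$), so the parabola is concave and its minimum on $[t_p,t_q]$ is attained at an endpoint. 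Your convex-combination argument would give the wrong inequality for a concave function.

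For the second assertion in (iv), your symmetrization can be made to work but is roundabout; the paper simply takes $\omega$ with $\Re e\langle\omega,\eta\rangle=0$, which kills the correction term outright.
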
 
 
 \medskip
 
 The proof of Theorem \ref{p-ellipticity AT} will be given after Definition \ref{LH-p}. We take this opportunity to thank A. Cialdea for pointing out a gap in our original proof of part (ii) of
 this theorem, necessitating a further reduction that we elaborate on in Section \ref{two} in the proof.
 
 \smallskip
 
Each of these three new notions of $p$-ellipticity are significant and are used in this paper for 
new results.  The integral condition is important in two ways. First, it leads to higher integrability of weak solutions - a  
regularity result that comes from a limited Moser iteration argument first introduced in \cite{DP} for scalar equations. Second, it is used in  
extrapolation. We recall that in
the case of scalar second order equations, the $p$-ellipticity condition allows one to extrapolate the range of $q$ for which one obtains solvability of the Dirichlet problem with boundary data in $L^q$ with nontangential maximal function estimates on solutions. We will see that the same can be shown for systems under the assumption of the {\it integral} $p$-ellipticity condition. Properties (iv) and (v) are extensions to the case of systems of analogous properties in the scalar case, observed
in \cite{CD} and \cite{DP}.

In Section \ref{Dir}, we define solvability of the Dirichlet problem for these systems, with boundary data in $L^q$, in the classical sense of nontangential maximal function estimates.
We are then able to claim the following extrapolation property, by an argument very similar to that of \cite{DP3} in the scalar case.
 
 \begin{theorem}[\textbf{Extrapolation}]\label{Extrapolation}
Let $\Omega\subset \mathbb R^n$ be a bounded or unbounded Lipschitz domain. Let
\begin{equation}\label{eq:dopS-II}
(\mathcal Lu)_\alpha=\partial_h(A^{hk}_{\alpha\beta}(x)\partial_k u^\beta)+B^h_{\alpha\beta}(x)\partial_hu^\beta,\quad \alpha=1,2,\dots, m;\quad\mbox{for }u:\Omega\to\mathbb C^m,
\end{equation}
be a second order operator with bounded and measurable coefficients $A$ and $|B|\lesssim \delta(x)^{-1}$, where
$A$ satisfies \eqref{EllipIC} and $\delta(x)$ is the distance of $x$ to the boundary of $\Omega$. Define 
  $$p_0 = \sup \{ p: A \ \text{satisfies condition  \eqref{eq:pellintp}}  \}.$$
  Assume that the $L^q$ Dirichlet problem is solvable for $\mathcal L$ for some $q\in (1,\frac{p_0(n-1)}{(n-2)})$  (if $p_0 = \infty$ or $n=2$ we require $q\in (1,\infty)$).

\noindent Then the $L^p$ Dirichlet problem is solvable for 
$\mathcal L$ for $p$ in the range $[q,  \frac{p_0(n-1)}{(n-2)})$, if one of the following constraints holds on the size of the vector $B$.
\begin{itemize}
\item $\Omega$ is bounded and $B(x)=o(\delta(x)^{-1})$ as $x\to\partial\Omega$.
\item $\Omega$ is bounded and $\limsup_{x\to\partial\Omega} |B(x)\delta(x)|\le K$. Here $K=K(A,p,n)>0$ is sufficiently small. 
\item $\Omega$ is unbounded and $|B(x)\delta(x)|\le K$ for all $x\in\Omega$. Here $K=K(A,p,n)>0$ is sufficiently small. 
\end{itemize}
\end{theorem}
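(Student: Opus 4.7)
\medskip

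\noindent\textbf{Proof plan.} The argument follows the scalar extrapolation scheme of \cite{DP3}, with the integral $p$-ellipticity condition \eqref{eq:pellintp} serving as the vectorial replacement for the scalar $p$-ellipticity that played this role there. The plan is to bootstrap the given $L^q$ solvability into $L^r$ solvability for all $r$ in $[q, p_0(n-1)/(n-2))$; each step of the bootstrap gains a fixed Sobolev-type factor $(n-1)/(n-2)$, so finitely many iterations reach any prescribed $r$ below the critical exponent.

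The core technical step is a Caccioppoli-type reverse H\"older inequality for weak solutions, valid for every $p \in (1, p_0)$. Given $u$ with $\mathcal{L}u = 0$ on a Whitney region $Q \subset \Omega$ and a smooth cutoff $\eta$, the plan is to test the weak formulation $\mathcal B(u,v)=0$ against the admissible vector field $v = |u|^{p-2}\bar u\,\eta^2$. After expanding $\nabla v$, the leading bilinear expression in $\nabla u$ is precisely the one appearing in the definition of integral $p$-ellipticity \eqref{eq:pellintp}, and the hypothesis yields
$$
\int_Q |\nabla u|^2 |u|^{p-2} \eta^2\, dx \;\lesssim\; \int |u|^{p}|\nabla \eta|^2 \, dx \;+\; \int |B|\,|\nabla u|\,|u|^{p-1}\eta^2\, dx.
$$
The final term is where the hypotheses on $B$ enter: using $|B|\lesssim\delta(x)^{-1}$, Cauchy--Schwarz, and the Hardy inequality applied to $w=|u|^{p/2}\eta$, one absorbs it into the left-hand side provided $|B|\delta(x)$ is sufficiently small near $\partial\Omega$. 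The three bulleted cases correspond to the three regimes in which this absorption is carried out -- the $o(\delta^{-1})$ bounded case uses compactness of a thickened interior to control the non-small part, while the other two require uniform smallness of $K$ in terms of the $p$-ellipticity constant of $A$.

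The second step converts the reverse H\"older bound into a nontangential maximal function estimate. Running the standard square-function / good-$\lambda$ machinery over Carleson sawtooth regions, exactly as in \cite{DP3}, and using the $L^q$ solvability hypothesis as the base case for the good-$\lambda$ iteration, the interior bound on $\int |\nabla u|^2 |u|^{p-2}$ is pushed to the boundary via a Sobolev embedding on the $(n-1)$-dimensional surface $\partial\Omega$. This yields an $L^{p(n-1)/(n-2)}$ bound on the nontangential maximal function $N(u)$ in terms of boundary data, explaining the critical exponent $p_0(n-1)/(n-2)$ in the statement. Iterating Step~2 until the desired $r$ is reached, and then passing from a dense subclass to arbitrary $L^r(\partial\Omega)$ data by a standard limiting argument, completes the extrapolation.

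The main obstacle is the vectorial character of $u$: unlike in the scalar case one cannot pass to a subsolution $|u|^{p/2}$ of a related equation, so every estimate must be carried out directly on $u$. The integral $p$-ellipticity condition \eqref{eq:pellintp} is engineered precisely to make Step~1 possible in this system setting, but the expansion of $\nabla(|u|^{p-2}\bar u\,\eta^2)$ generates cross terms coupling distinct components $u^\alpha$ and $u^\beta$ that must be grouped correctly before \eqref{eq:pellintp} can be invoked. A secondary obstacle is the unbounded case, where no compactness is available to help absorb the drift: here the constant $K$ controlling $|B|\delta$ must depend quantitatively on both the $p$-ellipticity and the implicit constants in the Hardy inequality, which is why the statement distinguishes it from the bounded case with an $o(\delta^{-1})$ drift.
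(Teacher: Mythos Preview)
Your proposal is sound in its essentials: the Caccioppoli estimate obtained by testing against $|u|^{p-2}\bar u\,\eta^2$ and invoking the integral $p$-ellipticity, together with the use of Hardy's inequality to absorb the drift term $B$, is exactly what the paper does in Section~3 (and the three bulleted regimes for $B$ are handled just as you say). The route you take in Step~2, however, differs from the paper's. You describe a bootstrap in which good-$\lambda$ machinery is applied repeatedly, gaining a Sobolev factor $(n-1)/(n-2)$ at each pass until the target exponent is reached. The paper instead appeals to Shen's real-variable criterion \cite{S1}, which reduces the \emph{entire} extrapolation to establishing a single reverse H\"older inequality for the nontangential maximal function $\tilde N_{2,a}(u)$ on solutions vanishing on a boundary ball (inequality \eqref{reverse holder of N}). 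That inequality is then proved by splitting $\tilde N$ into a near-boundary piece $\mathcal M_1$ and a far piece $\mathcal M_2$, controlling $\mathcal M_1$ by a telescoping Poincar\'e argument along a chain of Whitney balls that produces a fractional-integral bound in terms of a truncated square function $A(\nabla v)$ of $v=|u|^{p/2-1}u$, and finishing with a Sobolev inequality on $\partial\Omega$ together with the \emph{boundary} Caccioppoli/reverse H\"older of Theorem~\ref{Boundary estimate}. Both routes reach the same endpoint, but Shen's criterion is a one-shot argument (no iteration of the solvability statement itself), and it makes clear that the decisive input is the boundary reverse H\"older for solutions vanishing on a patch of $\partial\Omega$ --- not only the interior Whitney estimate your Step~1 emphasizes.
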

 
We then apply our general results to two well studied cases: the Lam\'e equations, and real elliptic systems of equations with rapidly oscillating periodic coefficients (homogenization). In Section \ref{five}, we investigate the consequences of $p$-ellipticity for the variable coefficient Lam\'e system:

\begin{eqnarray}\label{eq-Lame}
  \mathcal{L} u = \nabla \cdot (\lambda (x) (\nabla \cdot u) I + \mu(x) (\nabla u + (\nabla u)^T)).
\end{eqnarray}

Under certain natural conditions on the Lam\'e coefficients, the $L^2$ Dirichlet problem was shown to be solvable
in \cite[Corollary 1.2]{DHM}. Thus, in light of the extrapolation results,
 it is of interest to consider what further conditions on the coefficients give rise to $p$-ellipticity, for $p>2$. This is the subject of 
 Section \ref{five}, and Theorem \ref{Prop-Lame} in particular.
 For this investigation, we see the significance of using the pointwise $p$-ellipticity conditions, one of which produces ``necessary conditions" on the Lam\'e coefficients, while the other gives ``sufficient conditions". 
 
When specialized to the case of constant coefficients, the results of Section \ref{five} (see  \eqref{eq-LameICMd} in particular) 
yield as a corollary the following improvement on the $L^p$-dissipativity results of \cite{CM17}.
 
 \begin{theorem} If $\lambda,\,\mu$ are constants in $\Omega$,  the operator $\mathcal L$ defined by \eqref{eq-Lame} is $L^p$-dissipative if
\begin{equation}\label{eq-LameICa}
\left(1-\frac2p\right)^2\le 1-\left(\frac{\lambda+\mu}{\max\{\mu,\lambda+2\mu\} +\varepsilon}\right)^2
\end{equation}
for some $\varepsilon=\varepsilon(n,\lambda,\mu)>0$. This improves the range of $L^p$-dissipativity of the Lam\'e operator in dimensions 3 and higher given in Theorem 3.8 of \cite{CM17} which in our notation can be written as
\begin{equation}\label{eq-LameICb}
\left(1-\frac2p\right)^2\le 1-\frac{|\lambda+\mu|}{\max\{\mu,\lambda+2\mu\} }.
\end{equation}
\end{theorem}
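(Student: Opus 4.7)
The plan is to derive this theorem as a direct consequence of the integral $p$-ellipticity estimate \eqref{eq-LameICMd} for the constant-coefficient Lam\'e operator that will be established in Section \ref{five}, combined with the standard passage from integral $p$-ellipticity to $L^p$-dissipativity.

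First, I would recall that in the constant-coefficient setting, $L^p$-dissipativity in the sense of \cite{CM17} amounts to the inequality $\mathrm{Re}\int\langle A\nabla u,\nabla(|u|^{p-2}u)\rangle\,dx \ge 0$ for $u\in C_0^\infty(\Omega;\mathbb{C}^m)$. The familiar substitution $v = |u|^{(p-2)/2}u$ transforms this integrand into precisely the sesquilinear form appearing in the integral $p$-ellipticity condition \eqref{eq:pellintp}. Thus once the Lam\'e lower bound \eqref{eq-LameICMd} has been proved under the hypothesis \eqref{eq-LameICa}, the dissipativity of $\mathcal L$ in the claimed range is immediate.

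Second, I would verify that \eqref{eq-LameICa} genuinely enlarges the Cialdea-Maz'ya condition \eqref{eq-LameICb}. Writing $a = |\lambda+\mu|$ and $b = \max\{\mu,\lambda+2\mu\}$, a short case analysis on the sign of $\lambda+\mu$ and on which of $\mu$ or $\lambda+2\mu$ dominates shows that the strong Lam\'e ellipticity assumptions $\mu>0$ and $\lambda+2\mu>0$ force $a \le b$. Consequently, for every $\varepsilon>0$,
$$\left(\frac{a}{b+\varepsilon}\right)^2 < \frac{a^2}{b^2} \le \frac{a}{b},$$
so the right-hand side of \eqref{eq-LameICa} strictly exceeds the right-hand side of \eqref{eq-LameICb}, and the admissible interval of $p$ is strictly larger.

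The main obstacle lies entirely in Section \ref{five}: to obtain the squared coefficient $(\lambda+\mu)^2/(b+\varepsilon)^2$ rather than the linearized $|\lambda+\mu|/b$ used by Cialdea-Maz'ya, one has to carry the divergence cross term coming from the second Lam\'e coefficient as a full quadratic form and absorb it against the diagonal quadratic part with a small parameter $\varepsilon$ of slack, instead of applying an AM-GM split that costs a square root. Once the quadratic estimate \eqref{eq-LameICMd} is available, the present theorem reduces to the clean numerical comparison above.
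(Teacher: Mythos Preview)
Your proposal is correct and follows essentially the same route as the paper: the paper states only that ``this statement follows immediately from \eqref{eq-LameICMd}, and clearly, \eqref{eq-LameICa} is a larger interval of $p$'s than that of \eqref{eq-LameICb}.'' You have simply made both steps explicit---the passage from integral $p$-ellipticity to $L^p$-dissipativity via the substitution $v=|u|^{(p-2)/2}u$ (which is exactly the content of Theorem~\ref{p-ellipticity lower bound} and the remark preceding it), and the elementary inequality $(a/(b+\varepsilon))^2<a^2/b^2\le a/b$ once one checks $a=|\lambda+\mu|\le b=\max\{\mu,\lambda+2\mu\}$ under the ellipticity hypotheses.
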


This statement follows immediately from \eqref{eq-LameICMd}, and
clearly, \eqref{eq-LameICa} is a larger interval of $p$'s than that of \eqref{eq-LameICb}.
 
A further application to Lam\'e systems of equations is given in Corollary \ref{C:lame}, extending the $L^2$ solvability 
results of \cite{D20}.  In particular, we get an improvement in the range of $L^p$ solvability with the assumption of $p$-ellipticity in all
dimensions  bigger than 3.

In the same spirit, we use the extrapolation results of this paper to extend solvability results of \cite{KS}
 in the theory of homogenization. To be more precise, consider the following system of equations on the Lipschitz domain:
 
 \begin{equation}
    (\mathcal{L}_\epsilon u)_\alpha = \partial_h (A^{hk}_{\alpha \beta}(x / \epsilon) \partial_k u^\beta), \ \epsilon > 0,
 \end{equation}
 with the coefficient matrix $A$ being elliptic, periodic and satisfying certain symmetry and H\"older continuity condition. The solvability of both $L^2$-Dirichlet problem and $L^2$-regularity problem, together with uniformly estimates of the nontangential maximal function, were obtained in \cite{KS}. 
Combining our result with the extrapolation theory established in \cite{S2,S3}, the solvability can be extended to the range $2-\delta < p < \infty$ when $n = 3$ and $2- \delta < p < \frac{2(n-1)}{n-3} + \delta$ when $n \geq 4$, for some small $\delta$. 

When the strong pointwise $p$-ellipticity holds for the tensor $A(x)$, it holds likewise for
the rescaled tensor $A(x/\epsilon)$. We further observe that the extrapolation result of the present paper is independent of scale. Thus we are able to extend the range of uniform estimates in solvability of the Dirichlet problem depending on the 
range of $p$-ellipticity: see Theorem \ref{Homogen} of Section \ref{six}. 
\smallskip

Finally, we would like to thank the anonymous referee for many helpful suggestions and comments that greatly improved the exposition of this paper. 

\section{Several $p$-ellipticity conditions for elliptic systems}\label{two}

Let $\Omega$ be a domain in $\Bbb{R}^n$. Let us recall first the notion of $p$-ellipticity for scalar complex valued divergence form
equations, as introduced in \cite{CD} and \cite{DP}. We say that a complex valued matrix function $A=(A^{hk}(x))$ associated with the second order operator $\mathcal L$ of the form

\begin{equation}\label{eq:dop}
\mathcal Lu=\partial_h(A^{hk}(x)\partial_k u)+B^h(x)\partial_hu\qquad\mbox{for }u:\Omega\to\mathbb C
\end{equation}
is $p$-elliptic 
if there exists a constant $C=C(A,p)>0$ such that, for almost every $x\in\Omega$,
\begin{equation}
\label{eq:pellCD}
\Re e\sk{A(x)\xi}{\xi+|1-2/p|\bar\xi}_{{\mathbb C}^n}\geqslant C|\xi|^2\hskip 40pt \forall\,\xi\in{\mathbb C}^n.
\end{equation}

For $p=2$ this condition is exactly the classical ellipticity, which explains the terminology. The $p$-ellipticity condition in this form was formulated in this way in \cite{CD}, but for the purposes of this paper, the alternative formulation from \cite{DP} will be more useful:
\begin{equation}\label{eq:pellDP}
\langle \Re e\,A\,\lambda,\lambda\rangle+\langle \Re e\,A\,\eta,\eta\rangle+
\left\langle \left(\textstyle\sqrt{\frac{p'}{p}}
\Im m\,A-\sqrt{\frac{p}{p'}}\Im m\,A^t\right)\lambda,\eta\right\rangle
\ge C(|\lambda|^2+|\eta|^2),
\end{equation}
for all $\lambda,\eta\in\mathbb R^n$ and almost every $x\in\Omega$. A change of variable $\xi=\frac{pp'}{2}\lambda$ and then the choice of $\xi=\Re e(|v|^{-1}\overline{v}\nabla v)$, $\eta= \Im m(|v|^{-1}\overline{v}\nabla v)$ followed by integration over $\Omega$ yields the following integral condition (c.f. \cite[Theorem 2.4]{DP}):
\begin{eqnarray}\label{eq:pellint}
&&\Re e\, \int_{\Omega}\Big[\langle A\nabla v,\nabla v\rangle-(1-2/p)\langle(A-A^*)\nabla(|v|),|v|^{-1}\overline{v}\nabla v\rangle\nonumber\\
&&\qquad-(1-2/p)^2\langle A\nabla(|v|),\nabla(|v|) \rangle\Big]\, dx\ge C\int_{\Omega}|\nabla v|^2\,dx,\qquad\mbox{for all $v\in W^{1,2}_0(\Omega)$.}
\end{eqnarray}

It is important to note that, in general, the reverse direction from $\eqref{eq:pellint}$ back to \eqref{eq:pellDP} or \eqref{eq:pellCD}, does not hold. However,  $\eqref{eq:pellint}$ implies a weaker statement, namely that  for almost every $x\in\Omega$,
\begin{equation}
\label{eq:pellCDs}
\Re e\sk{A_s(x)\xi}{\xi+|1-2/p|\bar\xi}_{{\mathbb C}^n}\geqslant C|\xi|^2\hskip 40pt \forall\,\xi\in{\mathbb C}^n.
\end{equation}
Here $A_s$ denotes the symmetric part of the matrix $A$ (c.f. Theorem 1.3 of \cite{CD}). Furthermore,  
\eqref{eq:pellint}$\Leftrightarrow$\eqref{eq:pellCDs} when the distributional divergence of each column of a matrix $(\Im m A)_a$ is zero, by the same theorem. Here $(\Im m A)_a$  denotes the anti-symmetric part of the matrix $\Im m A$. In the case of an operator of the form \eqref{eq:dop} this can be always arranged (at the expense of some extra first order terms) by symmetrizing the imaginary part of $A$. 

Hence, in the case of scalar complex valued operators the pointwise condition \eqref{eq:pellCDs} is essentially equivalent to the integral condition \eqref{eq:pellint}.\vglue2mm

We now turn our attention to the complex valued elliptic systems of the form
\begin{equation}
(\mathcal Lu)_\alpha=\partial_h(A^{hk}_{\alpha\beta}(x)\partial_k u^\beta)+B^h_{\alpha\beta}(x)\partial_hu^\beta,\quad \alpha=1,2,\dots, m;\quad\mbox{for }u:\Omega\to\mathbb C^m.
\end{equation}

As mentioned in the introduction, for systems of equations, there are at least three different notions of classical ellipticity of the tensor $A=(A^{hk}_{\alpha\beta})$.
In analogy, we define three notions of $p$-ellipticity for complex valued elliptic systems and establish relations between them. 

These notions of ellipticity can be defined in the case of real valued elliptic systems as well; that is when the unknown function is $u:\Omega\to{\mathbb R}^m$ and the coefficients $A^{hk}_{\alpha\beta}$, $B^{h}_{\alpha\beta}$ are real-valued. The only difference is that the inner product in the conditions \eqref{EllipL}, \eqref {EllipIC} and \eqref{EllipLH} is over the real field and we only test these conditions  over the space of real valued tensors/functions.
\vglue2mm

\begin{definition}\label{strong-p}
The tensor-valued function $A^{hk}_{\alpha\beta}(x)$ satisfies the  {\bf strong $p$-ellipticity} condition if the following pointwise condition holds.

\begin{eqnarray}\label{EllipLp}
&&\Re e \left\langle  A(x)\left(\xi-\left(1-\frac2p\right)\xi(\omega)\right), \xi+\left(1-\frac2p\right)\xi(\omega)\right\rangle_{\mathbb C^{n\times m}}  =\\\nonumber
&&\Re e \left[A^{hk}_{\alpha\beta}(x)\left(\xi_{h}^{\alpha}-\left(1-\frac2p\right)\omega^\alpha(\Re e\langle \omega,\xi_h\rangle)   \right)\overline{\left(\xi_{k}^{\beta}+\left(1-\frac2p\right)\omega^\beta(\Re e\langle \omega,\xi_k\rangle)\right)}\right]\
\ge C|\xi|^2,
\end{eqnarray}
for all $\xi=(\xi^{\alpha}_{h})\in{\mathbb C}^{n\times m}$, $\omega\in\mathbb C^m$ with $|\omega|=1$ and a.e. $x\in\Omega$. Here, we have introduced the notation that for a tensor $\xi\in \mathbb C^{n\times m}$ and $\omega\in\mathbb C^m$, $\xi(\omega)$ denotes an element of $\mathbb C^{n\times m}$ defined by
\begin{equation}
\xi(\omega)^\alpha_h=\omega^\alpha   \Re e (\omega^\beta\overline{\xi^\beta_h})=\omega^\alpha(\Re e\langle \omega,\xi_h\rangle).
\end{equation}
\end{definition}

Observe that when $p=2$ this coincides with the usual strong ellipticity \eqref{EllipL}. Observe also that $|\xi(\omega)| \leq |\xi|$, a fact that we will
use later on. 

\medskip
We use Definition \ref{strong-p} to formulate a weaker integral condition.
Consider a function $v\in W^{1,2}_0(\Omega,\mathbb C^m)$. In the definition of strong $p$-ellipticity, take $\xi_h^\alpha=\partial_h v^\alpha$ and choose $\omega=\frac{v}{|v|}$. It follows that
\begin{equation}\label{eq-nnn}
\xi(\omega)_h^\alpha =\frac{v^\alpha}{|v|^2}\Re e \left\langle v,\partial_hv\right\rangle\quad\mbox{i.e.},\quad \xi(\omega)=\frac{v}{|v|}\nabla|v|.
\end{equation}

Hence if \eqref{EllipLp} holds, integrating over $\Omega$ gives that for some $C>0$, 
\begin{equation}\label{eq:pellintp}
    \Re e\int_\Omega \left\langle A \left(\nabla v-\left(1-\frac2p\right)\frac{v}{|v|}\nabla|v|\right) , \nabla v +\left(1-\frac2p\right)\frac{v}{|v|}\nabla|v|\right\rangle dx \geq C \int_\Omega |\nabla v|^2\, dx,
  \end{equation}
for all $v\in W^{1,2}_0(\Omega,\mathbb C^m)$. 

\begin{definition}\label{integral-p}
$A^{hk}_{\alpha\beta}(x)$ satisfies the  {\bf integral $p$-ellipticity} condition if  \eqref{eq:pellintp} holds.
\end{definition}

The integral $p$-ellipticity condition will be the key assumption for us in the subsequent sections of this paper.

We note that \eqref{eq:pellintp} is closely related to the notion of $L^p$-dissipativity of second order operators as defined by Cialdea and Maz'ya (see Lemma 4.1 of \cite{CM14}). $L^p$-dissipativity corresponds to having $C=0$ on the righthand side. We now work with \eqref{eq:pellintp} to convert this integral condition into a form that is useful in the theory of regularity of solutions.

\begin{theorem}\label{p-ellipticity lower bound}
  If $A$ satisfies the integral $p$-ellipticity condition \eqref{eq:pellintp}, then there exists $\lambda_p>0$ such that for any $u:\Omega\to\mathbb C$ such that $|u|^{\frac{p-2}{2}}u \in W^{1,2}_{0}(\Omega;\Bbb{C}^m)$,
  
\begin{equation}\label{IC-final}
   \Re e \int_\Omega \langle A \nabla u , \nabla (|u|^{p-2} u) \rangle dx \ge  \lambda_p \int_\Omega |\nabla u|^2 |u|^{p-2}  dx.
\end{equation}
\end{theorem}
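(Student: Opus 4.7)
The natural strategy is to choose $v = |u|^{(p-2)/2}u$ in the integral $p$-ellipticity condition \eqref{eq:pellintp} and verify that both sides specialise to what is needed. Since $|v|=|u|^{p/2}$ and $v/|v|=u/|u|$, a direct Leibniz computation yields the two identities
\begin{align*}
\nabla v - \bigl(1-\tfrac2p\bigr)\tfrac{v}{|v|}\nabla|v| &= |u|^{(p-2)/2}\nabla u,\\
\nabla v + \bigl(1-\tfrac2p\bigr)\tfrac{v}{|v|}\nabla|v| &= |u|^{-(p-2)/2}\nabla\bigl(|u|^{p-2}u\bigr),
\end{align*}
the first by cancelling the common term $\tfrac{p-2}{2}|u|^{(p-4)/2}u\otimes\nabla|u|$, and the second by recognising $\nabla(|u|^{p-2}u) = (p-2)|u|^{p-3}u\otimes\nabla|u| + |u|^{p-2}\nabla u$. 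Because $|u|^{(p-2)/2}$ is real, the two scalar factors $|u|^{\pm(p-2)/2}$ cross-cancel inside the sesquilinear pairing, so the left-hand side of \eqref{eq:pellintp} with this $v$ reduces to exactly $\Re e\int_\Omega\langle A\nabla u,\nabla(|u|^{p-2}u)\rangle\,dx$, which is the left-hand side of \eqref{IC-final}.

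It remains to bound $\int|\nabla v|^2\,dx$ from below by a constant multiple of $\int|\nabla u|^2|u|^{p-2}\,dx$. Setting $\omega = v/|v|$ and $\xi = \nabla v$, the formula $\xi(\omega)^\alpha_h = \omega^\alpha\,\Re e\langle\omega,\xi_h\rangle$ from Definition~\ref{strong-p} combined with $|\omega|=1$ gives $|\xi(\omega)|^2 = |\nabla|v||^2$ and $\Re e\langle\xi,\xi(\omega)\rangle = |\xi(\omega)|^2$. Using the first identity above and expanding the square then yields
\[
|u|^{p-2}|\nabla u|^2 \;=\; \bigl|\xi-(1-\tfrac2p)\xi(\omega)\bigr|^2 \;=\; |\nabla v|^2 - \bigl(1-\tfrac{4}{p^2}\bigr)|\nabla|v||^2.
\]
For $p \ge 2$ the coefficient $(1 - 4/p^2)$ is nonnegative, so $|\nabla v|^2 \ge |u|^{p-2}|\nabla u|^2$ pointwise and \eqref{IC-final} follows immediately from \eqref{eq:pellintp} with $\lambda_p = C$.

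The only genuinely delicate case is $1 < p < 2$, where $(1 - 4/p^2) < 0$ and the identity above points the wrong way. Here I would invoke Kato's inequality $|\nabla|v||^2 \le |\nabla v|^2$ to absorb the unfavourable term: the identity rearranges to $|u|^{p-2}|\nabla u|^2 \le (4/p^2)|\nabla v|^2$, hence $|\nabla v|^2 \ge (p^2/4)|u|^{p-2}|\nabla u|^2$, and \eqref{IC-final} holds with $\lambda_p = Cp^2/4$. Combining both regimes, $\lambda_p = C\min(1, p^2/4) > 0$ works uniformly. The zero set $\{u = 0\}$ poses no difficulty because $\nabla|v| = 0$ a.e.\ on $\{v = 0\}$ by the standard Sobolev chain rule, in line with the hypothesis $|u|^{(p-2)/2}u \in W^{1,2}_0(\Omega;\mathbb{C}^m)$. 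The main obstacle is essentially algebraic bookkeeping—matching the two identities above to the scalar factors $|u|^{\pm(p-2)/2}$ inside $A$—with the $p<2$ sign flip being the only point requiring a genuinely additional ingredient (Kato's inequality).
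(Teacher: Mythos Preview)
Your argument is correct and follows the same starting substitution $v=|u|^{(p-2)/2}u$ as the paper, but from that point on the two proofs diverge in presentation and in the handling of the range $1<p<2$.

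The paper does not plug $v$ directly into \eqref{eq:pellintp}. Instead it invokes a regularisation $g_\epsilon=(|v|^2+\epsilon^2)^{1/2}$, $u_\epsilon=g_\epsilon^{2/p-1}v$ from Cialdea--Maz'ya \cite[4.6]{CM14} and passes to the limit $\epsilon\to0^+$; this device is there to sidestep the chain-rule and division-by-$|u|$ issues on the zero set. You instead compute the two algebraic identities directly and dispose of $\{u=0\}$ by the standard Sobolev fact $\nabla|v|=0$ a.e.\ on $\{v=0\}$, which is cleaner provided one accepts the interpretation of $|u|^{(p-2)/2}\nabla u$ on that set as in Remark~\ref{r-int}. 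The two routes are equivalent; the paper's is more cautious about regularity, yours is more transparent algebraically.

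The substantive difference is the case $1<p<2$. The paper treats it by duality: it sets $w=|u|^{p-2}u$, observes $u=|w|^{p'-2}w$, and applies the $p\ge2$ argument to the adjoint $A^*$ (which is integral $p'$-elliptic by Theorem~\ref{p-ellipticity AT}\,(iv)). You avoid duality entirely by using the exact identity $|u|^{p-2}|\nabla u|^2=|\nabla v|^2-(1-4/p^2)|\nabla|v||^2$ together with Kato's inequality $|\nabla|v||\le|\nabla v|$, obtaining the explicit constant $\lambda_p=Cp^2/4$. This is a genuinely more elementary route for the sub-dual range. The paper's final step, showing the two-sided equivalence $|\nabla(|u|^{(p-2)/2}u)|^2\approx|u|^{p-2}|\nabla u|^2$, contains your one-sided bound implicitly, but does not isolate it the way you do.
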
 

\begin{remark}\label{r-int} Let us remark here on the interpretation of expressions of the form $\nabla(|u|^{\alpha-1} u)$ for $\alpha> 0$. At points where $|u|>0$ the usual chain rule applies. The set $\{x\in\Omega: |u|=0\}$ can be decomposed as
$$\{x\in\Omega: |u|=0\mbox{ and }|\nabla u|=0\}\cup \{x\in\Omega: |u|=0\mbox{ and }|\nabla u|\ne 0\}.$$
For points in $\{x\in\Omega: |u|=0\mbox{ and }|\nabla u|=0\}$ we shall interpret formulas of the form
$|u|^{\alpha-1}\nabla u$ to vanish on this set. The set of points where $\{x\in\Omega: |u|=0\mbox{ and }|\nabla u|\ne 0\}$ has measure zero and hence does not contribute to the value of integrals such as \eqref{IC-final}. 
\end{remark}

\begin{proof}
 Set $v = |u|^{\frac{p-2}{2}} u$ and

\[
  g_\epsilon = (|v|^2 + \epsilon^2)^{1/2}, \ u_\epsilon = g_\epsilon^{\frac{2}{p} - 1} v. 
\]
By following Cialdea-Maz'ya \cite[4.6]{CM14}, when $p \ge 2$ we have 
  
\begin{eqnarray}\nonumber
    &&\lim_{\epsilon \to 0^+} \Re e \int_\Omega \langle A \nabla u_\epsilon , \nabla (|u_\epsilon|^{p-2} u_\epsilon) \rangle\, dx \\
    &=& \Re e\int_\Omega \left\langle A \left(\nabla v-\left(1-\frac2p\right)\frac{v}{|v|}\nabla|v|\right) , \nabla v +\left(1-\frac2p\right)\frac{v}{|v|}\nabla|v|\right\rangle dx \\
    &\geq &C \int_\Omega |\nabla (|u|^{\frac{p-2}{2}} u)|^2\, dx.\nonumber
\end{eqnarray}
The first line above is exactly
\begin{equation}
  \Re e \int_\Omega \langle A \nabla u , \nabla (|u|^{p-2} u) \rangle\, dx,
\end{equation}
while the last line gives us precisely the righthand side of  \eqref{IC-final} with a new constant $\lambda_p$ (see \eqref{eq:zz}).

For $1 < p < 2$, we use a duality argument. To be precise, set $w = |u|^{p-2}u$, so that $u = |w|^{p'-2}w$. Since $|u|^{\frac{p-2}{2}}u \in  W^{1,2}_{0}(\Omega ; \Bbb{C}^m)$, it is easy to verify that $|w|^{p'-2}w \in W^{1,2}_{0}(\Omega ; \Bbb{C}^m)$. Then

\begin{equation}
  \Re e \int_\Omega \langle A \nabla u , \nabla (|u|^{p-2} u) \rangle dx  =\Re e \int_\Omega \langle A^* \nabla w , \nabla (|w|^{p'-2} w) \rangle dx,
\end{equation}
where $A^*$ is the adjoint of $A$. As the proof of Theorem \ref{p-ellipticity AT} (iv) will show, we observe that $A$ satisfies \eqref{eq:pellintp} if and only if  $A^*$ satisfies \eqref{eq:pellintp} for the dual value $p'=p/(p-1)$. Hence following a similar argument as the case $p > 2$, we obtain

\begin{eqnarray}
 \Re e \int_\Omega \langle A^* \nabla w , \nabla (|w|^{p'-2} w) \rangle  dx 
    &\geq & C \int_\Omega |\nabla (|w|^{\frac{p'-2}{2}} w)|^2  dx \\
    &=&C \int_\Omega |\nabla (|u|^{\frac{p-2}{2}} u)|^2  dx.\nonumber
\end{eqnarray}
To finish the proof, it suffices to show for all $p>1$

\begin{equation}\label{eq:zz}
  |\nabla (|u|^{\frac{p-2}{2}} u)|^2 \approx |u|^{p-2}|\nabla u|^2,
\end{equation}
bearing in mind that, as noted in Remark \ref{r-int}, these expressions all occur in the context of an integral. Indeed, when $|u|>0$

\begin{eqnarray}
  &&|\nabla (|u|^{\frac{p-2}{2}} u)|^2 \nonumber\\
  &=& \sum_{1 \leq h \leq n , 1 \leq \alpha \leq m}\left\langle \frac{p-2}{2} |u|^{\frac{p-4}{2}} \nabla_h |u| u^\alpha + |u|^{\frac{p-2}{2}} \nabla_h u^\alpha ,  \frac{p-2}{2} |u|^{\frac{p-4}{2}} \nabla_h |u| u^\alpha + |u|^{\frac{p-2}{2}} \nabla_h u^\alpha \right\rangle \nonumber\\
  &=& \left( \frac{p}{2} -1 \right)^2 |u|^{p-2} |\nabla |u||^2 + |u|^{p-2} |\nabla u|^2 \\
  &&+ \sum_{1 \leq h \leq n , 1 \leq \alpha \leq m} \left( \frac{p}{2} - 1 \right) |u|^{p - 3} \langle \nabla_h |u| u^\alpha , \nabla_h u^\alpha \rangle + \left( \frac{p}{2} - 1 \right) |u|^{p - 3} \langle  \nabla_h u^\alpha , \nabla_h |u| u^\alpha \rangle. \nonumber
\end{eqnarray}
Since

\begin{equation}
  \nabla_h |u| = \Re e \frac{\langle u, \nabla_h u \rangle}{|u|},
\end{equation}
it follows that for the first term of the second last line, we have

\begin{equation}
  0 \leq |u|^{p-2} |\nabla |u||^2 = |u|^{p-2} \left\vert\Re e \frac{\langle u,\nabla u \rangle}{|u|}\right\vert^2 \leq |u|^{p-2} |\nabla u|^2.
\end{equation}
For the last line, we have

\begin{eqnarray}
  &&\sum_{h,\alpha} |u|^{p - 3} \langle \nabla_h |u| u^\alpha , \nabla_h u^\alpha \rangle + |u|^{p - 3} \langle  \nabla_h u^\alpha , \nabla_h |u| u^\alpha \rangle \nonumber\\
  &=& 2 \sum_{h,\alpha} \Re e |u|^{p - 3} \langle \nabla_h |u| u^\alpha , \nabla_h u^\alpha \rangle\nonumber  \\
  &=& 2 |u|^{p-3} \sum_h \nabla_h |u| \Re e \langle u , \nabla_h u \rangle \nonumber\\
  &=& 2 |u|^{p-2} \sum_h \nabla_h |u| \nabla_h |u| \\
  &=& 2 |u|^{p - 2} |\nabla |u||^2 \leq  2 |u|^{p-2} |\nabla u|^2.\nonumber
\end{eqnarray}
\end{proof}

We are now ready to introduce the weakest form of $p$-ellipticity by analogy with the Legendre-Hadamard condition. This condition also comes from generalizing the two dimensional condition formulated in \cite[Theorem 4.2]{CM14}, proven there to be necessary for $L^p$-dissipativity.

\begin{definition}\label{LH-p}
$A^{hk}_{\alpha\beta}(x)$ satisfies the  {\bf Legendre-Hadamard $p$-ellipticity} condition if 
\begin{equation}\label{EllipLHp}
\Re e\left\langle
(A^{hk}(x)q_h {q_k)}\left(\eta-\left(1-\frac2p\right)\omega(\Re e\langle\omega,\eta\rangle)  \right),\eta+\left(1-\frac2p\right)\omega(\Re e\langle\omega,\eta\rangle) \right\rangle \ge C |\eta|^{2}|q|^2,
\end{equation}
for all $\eta,\omega \in{\mathbb C}^{m}$, $q\in{\mathbb R}^{n}$ with $|\omega|=1$, and a.e.  $x\in\Omega$.
\end{definition}

We now turn to the proof of Theorem \ref{p-ellipticity AT}, which 
shows that the condition above is indeed the weakest form of $p$-ellipticity and establishes further relationships among these three $p$-ellipticity conditions.

\begin{proof}  We have already observed that $(i)$ is true - see the argument just after Definition \ref{strong-p}.\vglue2mm

\noindent For $(ii)$, assume that \eqref{eq:pellintp} holds. Pick a point $x_0\in\Omega$ and consider any $0<\varepsilon<\mbox{dist}(x_0,\partial\Omega)$. Let $\psi\in C^\infty_0(B_1,\mathbb C^m)$ where $B_1$ is a unit ball in $\mathbb R^n$ and consider the test function $v(x)=\psi((x-x_0)/\varepsilon)$. Applying \eqref{eq:pellintp} to $v$ we see that
\begin{equation}
    \Re e\int_{\mathbb R^n} \left\langle A(x_0+\varepsilon y) \left(\nabla \psi(y)-\left(1-\frac2p\right)\frac{\psi(y)}{|\psi|}\nabla|\psi|\right) , \nabla \psi(y) +\left(1-\frac2p\right)\frac{\psi(y)}{|\psi|}\nabla|\psi|\right\rangle dy \geq C \int_{\mathbb R^n} |\nabla \psi|^2. \nonumber
  \end{equation}
Letting $\varepsilon\to 0+$ we see that for almost every $x_0\in\Omega$ we have
\begin{equation}\label{IC-mod}
    \Re e\int_{\mathbb R^n} \left\langle A(x_0) \left(\nabla \psi(y)-\left(1-\frac2p\right)\frac{\psi(y)}{|\psi|}\nabla|\psi|\right) , \nabla \psi(y) +\left(1-\frac2p\right)\frac{\psi(y)}{|\psi|}\nabla|\psi|\right\rangle dy \geq C \int_{\mathbb R^n} |\nabla \psi|^2.
  \end{equation}
Recall that we have assumed $\psi\in (C^\infty_0(B_1))^m$, but by dilation we see that \eqref{IC-mod} must hold for any $\psi\in C^\infty_0(\mathbb R^n,\mathbb C^m)$.

Our plan is to proceed as in  \cite[4.10]{CM14} which is an argument that only works in two dimensions. Without loss of generality assume that $x_0=0$ and so $A=A(x_0)$ is fixed. We begin with a dimensional reduction in the spatial variables so that we can use the argument from \cite{CM14}. Our reduction is inspired by, but different from, a dimension reduction argument for the Lam\'e system in \cite[3.7]{CM14}. For $\varepsilon>0$ consider test functions
\begin{equation} 
\psi_\varepsilon(y)=\Psi(y_1/\varepsilon,y_2/\varepsilon)\beta(y_3,y_4,\dots,y_n),
\end{equation}
where $\Psi\in C^\infty_0(\mathbb R^2,\mathbb C^m)$ and $\beta\in C^\infty_0(\mathbb R^{n-2})$ is nonnegative and real-valued. It follows that
\begin{equation} \nonumber
\nabla \psi_\varepsilon(y)=\left(\frac1\varepsilon\nabla_{y_1,y_2}\Psi(y_1/\varepsilon,y_2/\varepsilon)\beta(y_3,\dots,y_n),\Psi(y_1/\varepsilon,y_2/\varepsilon)\nabla_{y_3,\dots,y_n}\beta(y_3,\dots,y_n)\right).
\end{equation}
We use this in \eqref{IC-mod}. Observe that $\frac{\psi_\varepsilon(y)}{|\psi_\varepsilon(y)|}=\frac{\Psi(y_1/\varepsilon,y_2/\varepsilon)}{|\Psi(y_1/\varepsilon,y_2/\varepsilon)|}$ as $\beta$ is real nonnegative. At the same time we also rescale so that $(y_1/\varepsilon,y_2/\varepsilon,y_3,\dots,y_n)=(x_1,x_2,x_3,\dots,x_n)=(x',x'')$. Here $x'=(x_1,x_2)$ and $x''=(x_3,\dots,x_n)$. It follows that
\begin{eqnarray}&&\nonumber
\Re e\int_{\mathbb R^n} \left\langle A\left(\nabla \psi_\varepsilon(y)-\left(1-\frac2p\right)\frac{\psi_\varepsilon(y)}{|\psi_\varepsilon|}\nabla|\psi_\varepsilon|\right) , \nabla \psi_\varepsilon(y) +\left(1-\frac2p\right)\frac{\psi_\varepsilon(y)}{|\psi_\varepsilon|}\nabla|\psi_\varepsilon|\right\rangle dy=\\
=&&\int_{\mathbb R^{n-2}}\beta^2(x'')\,dx''\,\times\\\nonumber
&&\, \Re e\int_{\mathbb R^2} \left\langle {\bf A}\left(\nabla_{x'} \Psi(x')-\left(1-\frac2p\right)\frac{\Psi(x')}{|\Psi|}\nabla_{x'}|\Psi|\right) , \nabla_{x'} \Psi(x') +\left(1-\frac2p\right)\frac{\Psi(x')}{|\Psi|}\nabla_{x'}|\Psi|\right\rangle dx'+\\\nonumber
+&&\hskip-6mm \varepsilon^2\, \Re e\int_{\mathbb R^{n}} \left\langle {\bf B}\left(\Psi(x')\nabla_{x''} \beta-\left(1-\frac2p\right)\Psi(x')\nabla_{x''}\beta\right) , \Psi(x')\nabla_{x''} \beta +\left(1-\frac2p\right)\Psi(x')\nabla_{x''}\beta\right\rangle dx'dx''\\\nonumber
+&&\hskip-6mm \varepsilon\,\hskip2mm \Re e\int_{\mathbb R^{n}} \mbox{ (all mixed terms) }\, dx'dx'' \ge \\\nonumber
=&&C\int_{\mathbb R^{n-2}}\beta^2(x'')\,dx''\,\times \, \int_{\mathbb R^2} |\nabla_{x'}\Psi(x')|^2dx' +C\varepsilon^2 \int_{\mathbb R^{n}}|\Psi(x')\nabla_{x''}\beta(x'')|^2\,dx'dx'' \\\nonumber
+&&\hskip-6mm C\varepsilon\,\hskip2mm \int_{\mathbb R^{n}} \mbox{ (all mixed terms) }\, dx'dx''.
\end{eqnarray}
Here ${\bf A}=(A^{hk}_{\alpha\beta})^{h,k=1,2}_{\alpha,\beta=1,2,\dots,m}$ and similarly ${\bf B}$ is the $(n-2)\times (n-2)$ minor in $h,k$.
The ``mixed terms" contain a single derivative falling on $\Psi$.
All the integrals are finite so we can let $\varepsilon \to 0+$ to obtain
\begin{eqnarray}&&\int_{\mathbb R^{n-2}}\beta^2(x'')\,dx''\,\times\\\nonumber
&&\, \Re e\int_{\mathbb R^2} \left\langle {\bf A}\left(\nabla_{x'} \Psi(x')-\left(1-\frac2p\right)\frac{\Psi(x')}{|\Psi|}\nabla_{x'}|\Psi|\right) , \nabla_{x'} \Psi(x') +\left(1-\frac2p\right)\frac{\Psi(x')}{|\Psi|}\nabla_{x'}|\Psi|\right\rangle dx'\\\nonumber
\ge&&C\int_{\mathbb R^{n-2}}\beta^2(x'')\,dx''\,\times \, \int_{\mathbb R^2} |\nabla_{x'}\Psi(x')|dx' .
\end{eqnarray}
Cancelling out the constant $\int_{\mathbb R^{n-2}}\beta^2(x'')\,dx''$ yields
\begin{eqnarray}\nonumber
&&\, \Re e\int_{\mathbb R^2} \left\langle {\bf A}\left(\nabla_{x'} \Psi(x')-\left(1-\frac2p\right)\frac{\Psi(x')}{|\Psi|}\nabla_{x'}|\Psi|\right) , \nabla_{x'} \Psi(x') +\left(1-\frac2p\right)\frac{\Psi(x')}{|\Psi|}\nabla_{x'}|\Psi|\right\rangle dx\\
\ge&&C \, \int_{\mathbb R^2} |\nabla_{x'}\Psi(x')|dx' .
\end{eqnarray}

The expression above has exactly the same form as \eqref{IC-mod} but now the dimension is two, as we have abandoned the variables $x_3,\dots,x_n$. Hence we just work with  \eqref{IC-mod} under the assumption that $n=2$. As in \cite[4.10]{CM14} we consider now test functions $\psi_R$ of the form
\begin{equation} 
\psi_R(x)=w(x)\eta(\log|x|/\log R),\quad\mbox{where}\quad w(x)=\mu\omega+\varphi(x),
\end{equation}
$|\omega|=1$, $\omega\in\mathbb C^m$, $\mu \in\mathbb R^+$, $R>1$ and $\eta\in C^\infty(\mathbb R)$ such that $\eta(t)=1$ when $t<1/2$ and $\eta(t)=0$ when $t\ge 1$. Assume that supp $\varphi\subset B_\delta(0)$ and that $|\nabla \phi| \in L^2$. Then, as follows from the calculation done in \cite[pp. 98-99]{CM14}, we have that
\begin{eqnarray}&&\nonumber
\lim_{R\to\infty}  \Re e\int_{\mathbb R^n} \left\langle A(x_0) \left(\nabla \psi_R-\left(1-\frac2p\right)\frac{\psi_R}{|\psi_R|}\nabla|\psi_R|\right) , \nabla \psi_R +\left(1-\frac2p\right)\frac{\psi_R}{|\psi_R|}\nabla|\psi_R|\right\rangle dx\\
&=&\Re e\int_{B_\delta(0)} \left\langle A(x_0) \left(\nabla w-\left(1-\frac2p\right)\frac{w}{|w|}\nabla|w|\right) , \nabla w +\left(1-\frac2p\right)\frac{w}{|w|}\nabla|w|\right\rangle dx\\
&\ge&C \int_{B_\delta(0)} |\nabla w|^2\,dx.\nonumber
\end{eqnarray}

The limiting argument above which allows us to pass from $\psi_R$ to $w$, as $R \to \infty$, relies on a calculation 
that works only in two dimensions; see the limit integral inequality on page 98 of \cite{CM14}.  We continue computing the above quantity
associated with this particular choice of $w$. 

Recall that $\nabla w=\nabla \varphi$. 
For the calculation of the other term involving $w$, we note that
\begin{equation} 
\frac{w}{|w|}\partial_k|w|=\frac{w}{|w|^2}\Re e\langle w,\partial_k w\rangle=|\mu\omega+\varphi|^{-2}(\mu\omega+\varphi)(\Re e\langle \mu\omega+\varphi,\partial_k\varphi\rangle.
\end{equation}
Observe that if we let $\mu\to\infty$ we see that the righthand side converges to $\omega(\Re e\langle\omega,\partial_k\varphi\rangle)$. It follows that
\begin{eqnarray}&&\nonumber
\lim_{\mu\to\infty}  \Re e\int_{B_\delta(0)} \left\langle A(x_0) \left(\nabla w-\left(1-\frac2p\right)\frac{w}{|w|}\nabla|w|\right) , \nabla w +\left(1-\frac2p\right)\frac{w}{|w|}\nabla|w|\right\rangle dx\\
&&=\Re e\int_{B_\delta(0)} \left\langle A^{hk}(x_0) \left(\partial_k \varphi-\left(1-\frac2p\right)\omega(\Re e\langle\omega,\partial_k\varphi\rangle)\right) , \partial_h \varphi +\left(1-\frac2p\right)\omega(\Re e \langle\omega,\partial_h\varphi\rangle)\right\rangle dx\nonumber\\
&&\ge C \int_{B_\delta(0)} |\nabla \varphi|^2\,dx.\label{eq"wrrom}
\end{eqnarray}

We now argue as in \cite[Theorem 2.6]{Y}. Let $\rho$ be a 2-periodic \lq\lq sawtooth" function that is equal to $t$ on $[0,1]$ and $2-t$ on $[1,2]$. Hence $\rho'(t)=\pm 1$ for a.e. $t\in\mathbb R$. For a fixed $\eta\in\mathbb C^m$ and $q=(q_1,0)\in\mathbb R^2$ consider test functions
$$\varphi_\varepsilon(x)=\varepsilon\zeta(x)\rho(q\cdot x/\varepsilon)\eta,$$
where $\zeta\in C_0^\infty(B_\delta(0))$ is real-valued. Observe that
$$\partial_k \varphi_\varepsilon(x)=\varepsilon \partial_k\zeta(x)\rho(q\cdot x/\varepsilon)\eta+\zeta(x)\rho'(q\cdot x/\varepsilon)q_k\eta.$$
Using this in \eqref{eq"wrrom} and letting $\varepsilon\to 0+$ we get
\begin{eqnarray}&&\nonumber
\lim_{\varepsilon\to0+}  \Re e\int_{B_\delta(0)} \left\langle A^{hk}(x_0) \left(\partial_k \varphi_\varepsilon-\left(1-\frac2p\right)\omega(\Re e\langle\omega,\partial_k\varphi_\varepsilon\rangle)\right) , \partial_h\varphi_\varepsilon +\left(1-\frac2p\right)\omega(\Re e \langle\omega,\partial_h\varphi_\varepsilon\rangle)\right\rangle dx\nonumber\\
&&=\Re e\int_{B_\delta(0)} \left\langle A^{hk}(x_0) \left(q_k\eta-\left(1-\frac2p\right)\omega(\Re e\langle\omega,q_k\eta\rangle)\right) , q_h\eta +\left(1-\frac2p\right)\omega (\Re e\langle\omega,q_h\eta\rangle)\right\rangle \zeta(x)^2\,dx\nonumber
\\
&&\ge C \int_{B_\delta(0)} |\eta|^2|q|^2\zeta(x)^2\,dx.\label{eq"wrrom3}
\end{eqnarray}
From this we conclude that in the special case $q =(q_1,0)$ we have proven \eqref{EllipLHp} 
because $\zeta$ is an arbitrary test function. That is:
\begin{equation}\label{EllipLHpsc}
\Re e\left\langle
(A^{11}(x)q_1^2)\left(\eta-\left(1-\frac2p\right)\omega(\Re e\langle\omega,\eta\rangle)  \right),\eta+\left(1-\frac2p\right)\omega(\Re e\langle\omega,\eta\rangle) \right\rangle \ge C |\eta|^{2}q_1^2,
\end{equation}
Now we treat the scalar $q_1$ as the first component of the vector $(q_1, 0,0, \dots, 0)$ in ${\mathbb R}^n$. For any $q\in{\mathbb R}^n$, choose an orthogonal matrix $R\in SO(n)$ such that $|q|e_1=Rq$. Then the change of variables 
$y=R^{T}z$ in \eqref{IC-mod} will transform this into an equivalent expression with the tensor $RAR^{T}$, where the indicated matrix multiplication for $A^{hk}_{\alpha\beta}$ occurs in the $h,k$ variables. We repeat the above calculation for this tensor giving us \eqref{EllipLHpsc} with $$|q|^2(RAR^{T})^{11}=
(|q|e_1)^TRAR^T(|q|e_1)=q^TAq=A^{hk}q_hq_k,$$
which gives \eqref{EllipLHp} in the general case.
\vglue2mm

\noindent $(iii)$. This is a trivial observation.\vglue2mm 

\noindent $(iv)$. We first realize that, since $(1-2/p)=-(1-2/p')$, taking adjoints shows that the $p$-ellipticity condition for $A$ implies that the $p'$-ellipticity holds for $A^*$.

If the condition \eqref{EllipLHp}  holds then, by choosing $\omega\bot\xi$, it follows that \eqref{EllipLH} must hold.\vglue2mm

\noindent $(v)$. If $A$ satisfies any one of the three $p$-ellipticity conditions for $p$ and for $q$, we first prove that it satisfies the same condition for any $r$ between $p$ and $q$.

We give the proof for the condition \eqref{EllipLp}, as the proofs for other two conditions are analogous.  Fix some $\xi\in \mathbb C^{n\times m}$ and $\omega\in\mathbb C^m$ with $|\omega|=1$. Let $\eta=\xi(\omega)$. Consider, for $x$ fixed, a function
$$f(t)=\Re e \left\langle  A(x)(\xi-t\eta), \xi+t\eta  \right\rangle_{\mathbb C^{n\times m}}=-t^2|\eta|^2+\dots,$$
for $t\in\mathbb R$. This is clearly a quadratic function in $t$ with negative leading coefficient, i.e., the graph of $f$ is a concave down parabola. Such a function attains its minimum on any bounded interval at its end-points. But the strong $p$ and $q$-ellipticity condition
on $A$ implies that
$f(1-2/p)\ge C|\xi|^2$ and $f(1-2/q)\ge C|\xi|^2$, for almost every $x \in \Omega$. It follows that $f(t)\ge C|\xi|^2$ for all $t$ between the points $1-2/p$ and $1-2/q$. Thus $A$ is strongly $r$-elliptic for all $r$ of the form $t=1-2/r$ with $t$ between the points $1-2/p$ and $1-2/q$.

The final claim is that $p$-ellipticity is an open property; that is, if $A$ is $p$-elliptic then for some small $\varepsilon=\varepsilon(A,p)>0$, $A$ is also $q$-elliptic for $|p-q|<\varepsilon$. Again, we check this assuming the condition \eqref{EllipLp} with the argument for the other conditions being similar. Using the notation we have introduced above we see that
for $\xi,\eta=\xi(\omega)\in \mathbb C^{n\times m}$ (here $|\eta|\le|\xi|$) we have for $t_0=1-2/p$

\begin{eqnarray}\nonumber
f(t)&=&f(t_0)+(t-t_0)\Re e \left\langle  A(x)(\xi-t_0\eta), \eta  \right\rangle-(t-t_0)\Re e \left\langle  A(x)\eta,\xi+t_0\eta  \right\rangle-\Re e(t-t_0)^2\left\langle  A(x)\eta,\eta \right\rangle\\
&\ge& C|\xi|^2-2|t-t_0|\|A\|_{L^\infty}|\xi\pm t_0\eta||\eta|-|t-t_0|^2\|A\|_{L^\infty}|\eta|^2\\\nonumber
&\ge& \left[C-\|A\|_{L^\infty}(4|t-t_0|+|t-t_0|^2)\right]|\xi|^2.
\end{eqnarray}
Hence for sufficiently small $|t-t_0|$ we can ensure that $f(t)\ge C|\xi|^2/2$. The strong $q$-ellipticity for $A$ follows for $q$ near $p$.

\end{proof}

\section{Regularity of solutions}

In this section we will study the regularity theory of solutions to the $L^p$-Dirichlet problem. These results consist of interior estimates and boundary estimates and will be used as a substitution of the De Giorgi-Nash-Moser regularity theory.

\subsection{Interior estimate}

We will prove the following interior estimate, which is similar to Theorem 1.1 of \cite{DP}. Our proof differs as we assume the integral $p$-ellipticity condition (which requires some changes in the argument). 

\begin{theorem}

Suppose $u \in W^{1,2}_{loc} (\Omega; \Bbb{C}^m)$ is the weak solution to the equation $\mathcal{L} u = \text{div} (A(x) \nabla u) + B(x) \cdot \nabla u = 0$ on $\Omega$. Let $p_0 = \sup \{ p > 1: A \text{ satisfies condition \eqref{eq:pellintp}} \}$ and assume that the coefficients of $B$ are in $L^\infty_{loc}(\Omega)$ and satisfy

\begin{equation}
  |B_{\alpha \beta}^h (x)| \leq \frac{K}{\delta(x)},
\end{equation}  
where $K$ is a uniform constant and $\delta(x)$ is the distance of $x$ to $\partial \Omega$. Then, for any $x \in \Omega$ and any
$r < \delta(x)/3$, if $B_r(x)$ is the ball centered at $x$ with radius $r$, we have:

\begin{itemize}
  \item (Caccioppoli-type inequality): 
    \begin{equation}
      \vint_{B_r(x)} |u|^{p-2} |\nabla u|^2  dy \leq \frac{C}{r^2} \vint_{B_{2r}(x)} |u|^p  dy,
    \end{equation}
    where $p \in (p_0', p_0)$ and the constant $C$ depends on $n,m,K,\Lambda$ and $\lambda_p$.
    
  \item (Reverse H\"older's inequality) For any $q \in [1, \frac{p_0 n}{n-2})$, 
  
  \begin{equation}
    \left( \vint_{B_r(x)} |u|^q dy \right)^{1/q} \leq C  \vint_{B_{2r} (x)} |u| dy ,
  \end{equation}
  where the constant $C$ depends on $n,m,K,\Lambda,$ and $\lambda_p$.
\end{itemize} 

\end{theorem}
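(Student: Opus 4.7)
My plan is to prove both estimates by the standard test-function method, combining Theorem \ref{p-ellipticity lower bound} with Sobolev embedding. First I would fix $x\in\Omega$ and $r<\delta(x)/3$; the triangle inequality gives $\delta(y)>r$ throughout $B_{2r}(x)$, hence $|B(y)|\le K/r$ uniformly on that ball. Choose a cutoff $\eta\in C_0^\infty(B_{3r/2}(x))$ with $\eta\equiv 1$ on $B_r(x)$, $0\le\eta\le 1$, and $|\nabla\eta|\lesssim 1/r$. The whole argument should be read modulo the same regularization as in the proof of Theorem \ref{p-ellipticity lower bound}, which is needed to make the test functions admissible in the range $1<p<2$.

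For the Caccioppoli inequality, the plan is to combine two ingredients. The first is Theorem \ref{p-ellipticity lower bound} applied to the compactly supported function $u\eta$, which (using $\eta\ge 0$ so that $|u\eta|^{p-2}(u\eta)=\eta^{p-1}|u|^{p-2}u$) yields
\[
\lambda_p\int|\nabla(u\eta)|^2|u\eta|^{p-2}\,dy\le \Re e\int\langle A\nabla(u\eta),\nabla(\eta^{p-1}|u|^{p-2}u)\rangle\,dy.
\]
Expanding the right-hand side via $\nabla(u\eta)=\eta\nabla u+u\nabla\eta$ and the product rule for $\nabla(\eta^{p-1}|u|^{p-2}u)$ isolates the principal term $\Re e\int\eta^p\langle A\nabla u,\nabla(|u|^{p-2}u)\rangle$ plus cross terms pointwise bounded by a constant times $\eta^{p-1}|u|^{p-1}|\nabla u||\nabla\eta|+\eta^{p-2}|u|^p|\nabla\eta|^2$. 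The second ingredient is to test the weak equation $\mathcal Lu=0$ against $\phi=|u|^{p-2}u\,\eta^p$, which rewrites the principal term as
\[
\Re e\int\eta^p\langle A\nabla u,\nabla(|u|^{p-2}u)\rangle\,dy=-p\Re e\int\eta^{p-1}\langle A\nabla u,|u|^{p-2}u\otimes\nabla\eta\rangle\,dy-\Re e\int \eta^p B^h\partial_hu\cdot\overline{|u|^{p-2}u}\,dy.
\]
Substituting, using $|B|\le K/r$ on $B_{2r}(x)$, and applying Young's inequality in the form $\eta^{p-1}|u|^{p-1}|\nabla u|/r\le\varepsilon\,\eta^p|u|^{p-2}|\nabla u|^2+C_\varepsilon\eta^{p-2}|u|^p/r^2$ to absorb every factor of $|\nabla u|$ into the left-hand side will produce the Caccioppoli estimate. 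I expect the main obstacle to be purely bookkeeping: Theorem \ref{p-ellipticity lower bound} is an integral inequality that cannot be localized by simply inserting $\eta^2$ into the integrand, so applying it to $u\eta$ generates a family of cross terms whose careful absorption is the entire content of the step.

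The reverse H\"older inequality is a short postlude. Setting $w=|u|^{p/2}$, the identity \eqref{eq:zz} gives $|\nabla w|^2\lesssim|u|^{p-2}|\nabla u|^2$, so the Caccioppoli estimate controls $\int_{B_r}|\nabla w|^2$ by $r^{-2}\int_{B_{2r}}w^2$. The Sobolev embedding $W^{1,2}_0\hookrightarrow L^{2n/(n-2)}$ applied to $w\eta$ then delivers
\[
\Bigl(\vint_{B_r(x)}|u|^{pn/(n-2)}\,dy\Bigr)^{(n-2)/(np)}\le C\Bigl(\vint_{B_{2r}(x)}|u|^p\,dy\Bigr)^{1/p},
\]
valid for any $p\in(p_0',p_0)$. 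To replace the right-hand side by an $L^1$ norm and achieve the full range $q\in[1,p_0n/(n-2))$, I would interpolate via H\"older's inequality, $\|u\|_{L^p}\le\|u\|_{L^1}^\theta\|u\|_{L^{pn/(n-2)}}^{1-\theta}$, and apply the familiar self-improvement (Giaquinta--Modica style) that absorbs the $L^{pn/(n-2)}$ term through a covering-and-iteration scheme. Since the Caccioppoli holds uniformly on all sub-balls for every such $p$, this last step is routine.
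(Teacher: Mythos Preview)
Your strategy is essentially the paper's: localize, invoke Theorem \ref{p-ellipticity lower bound} on the cut-off solution, use the equation to rewrite the principal term, and absorb cross terms by Young's inequality; then pass from Caccioppoli to reverse H\"older via Sobolev and a self-improvement iteration. The paper organizes the first step slightly differently---it sets $v=u\phi^{2}$, computes $\mathcal Lv$ explicitly (which is nonzero error terms since $\mathcal Lu=0$), and pairs with $|v|^{p-2}\bar v$---but the resulting cross terms are the same in kind as yours, and the reverse-H\"older-to-$L^{1}$ step in the paper quotes a specific iteration of Shen which is exactly the ``Giaquinta--Modica style'' absorption you describe.

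One technical point is worth flagging. With your choice of $u\eta$ the cross terms and the Young remainder carry the factor $\eta^{p-2}$, which blows up near $\partial(\mathrm{supp}\,\eta)$ when $p<2$; the regularization from Theorem \ref{p-ellipticity lower bound} acts on $|u|$, not on the cutoff, and does not cure this. The paper's choice $v=u\phi^{2}$ is precisely designed to avoid the issue: all cutoff powers that appear are at least $\phi^{2p-2}$, hence nonnegative for $p>1$. Replacing your $u\eta$ by $u\eta^{2}$ (and correspondingly testing with $\eta^{2p}|u|^{p-2}u$) fixes your argument with no further change.
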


\begin{proof}
  We first assume that coefficients of $A,B$ are all smooth, so that the classical solution $u$ is also smooth. We will later use an approximation argument similar to Section 7 of \cite{KP}. Let $\phi \in C_0^\infty (B_{2r}(x))$ satisfy $0 \leq \phi \leq 1$, $\phi \equiv 1$ on $B_r(x)$ and $|\nabla \phi| \lesssim \frac{1}{r}$. Set $v = u \phi^2$, we have
  
  \begin{equation}
    (\mathcal{L}v)^\alpha = u^\beta \partial_{h} ( A^{hk}_{\alpha \beta}(x) \partial_k (\phi^2) + B_{\alpha \beta} ^h \partial_h (\phi^2) )  + A_{\alpha \beta}^{hk}(x) \partial_h u^\beta \partial_k (\phi^2) + (A^*)_{\alpha \beta}^{hk}(x) \partial_h u^\beta \partial_k(\phi^2).
  \end{equation}   
  We multiply both sides by $|v|^{p-2}(\overline{v})^\alpha$, sum over $\alpha = 1,2, \dots, n$ and integrate, to get:
  
  \begin{eqnarray}\nonumber
    &&\int_\Omega  \langle A \nabla v , \nabla (|v|^{p-2} v) \rangle \\\nonumber
      &=& \int_\Omega \langle B \nabla v , |v|^{p-2} v \rangle + \int_\Omega A^{hk}\nabla_h \phi^2 \nabla_k (|v|^{p-2} u \overline{v}) - \int_\Omega B^h \nabla_h \phi^2 |v|^{p-2}u \overline{v} \\
      &+& \int_\Omega |v|^{p-2}\overline{v} \left( A^{hk}\nabla_h u \nabla_k \phi^2 + A^{*hk} \nabla_h u \nabla_k \phi^2 \right) \\
      &=: & I_1 + I_2 + I_3 + I_4.\nonumber
  \end{eqnarray}
  Here we use the inner product $\langle \,,\, \rangle$ to denote the summation over both upper and lower indices $h,k,\alpha, \beta.$ Also, for simplicity of notations, for each $h,k$ fixed, we write $A^{hk}$ as the $m \times m$ matrix $(A^{hk}_{\alpha \beta})_{\alpha , \beta = 1}^m$. Thus $I_1, I_2, I_3, I_4$ above are all scalar quantities.
   
  For $I_1$,
  \begin{eqnarray}\nonumber
    |I_1| &\lesssim & \frac{1}{r}\int_\Omega |v|^{p-1}|\nabla v| \\
    &\leq & \frac{1}{r} \left( \int_\Omega |v|^{p-2} |\nabla v|^2 \right)^{1/2} \left( \int_\Omega |v|^p \right)^{1/2} \\
    &\leq &\frac{1}{r} \left( \int_\Omega |v|^{p-2} |\nabla v|^2 \right)^{1/2} \left( \int_{B_{2r}(x)} |u|^p \right)^{1/2}.\nonumber
  \end{eqnarray}

  For $I_2$,
  
  \begin{eqnarray} \nonumber
   |I_2|
    &\lesssim & \left| \int_\Omega A^{hk}\phi \nabla_h \phi |v|^{p-3} \frac{\langle v,\nabla_k v \rangle}{|v|} u \overline{v} \right| + \left| \int_\Omega A^{hk}\phi \nabla_h \phi |v|^{p-2} \nabla_k u \overline{v} \right| \\
    &+& \left| \int_\Omega A^{hk}\phi \nabla_h \phi |v|^{p-2}u \nabla_k \overline{v} \right| \\
    &=:& I_{21} + I_{22} + I_{23}.\nonumber
  \end{eqnarray}
  For $I_{21}$,
  
  \begin{eqnarray}\nonumber
    I_{21} &\leq & \int_\Omega \phi |\nabla \phi| |v|^{p-2}|\nabla v| |u| \\
    &\lesssim & \left( \int_\Omega |v|^{p-2} |\nabla v|^2 \right)^{1/2} \left( \int_\Omega |v|^{p-2} |u|^2 \phi^2 |\nabla \phi|^2 \right)^{1/2} \\
    &\lesssim & \left( \int_\Omega |v|^{p-2} |\nabla v|^2 \right)^{1/2} \frac{1}{r} \left( \int_\Omega |u|^p \phi^{2p - 2} \right)^{1/2} \nonumber\\
    &\leq & \left( \int_\Omega |v|^{p-2} |\nabla v|^2 \right)^{1/2} \frac{1}{r} \left( \int_{B_{2r}(x)} |u|^p  \right)^{1/2},\nonumber
  \end{eqnarray}
For $I_{22}$, we will use the useful identity
  
  \begin{equation}
    \overline{v}\nabla u = \overline{u} \nabla v - |u|^2 \nabla \phi^2,
  \end{equation}
  then
  
  \begin{eqnarray}\nonumber
   I_{22} 
    &\lesssim & \int_\Omega \phi |\nabla \phi| |v|^{p-2} | \overline{u} \nabla v - |u|^2 \nabla \phi^2 | \\\nonumber
    &\lesssim & \int_\Omega \phi |\nabla \phi| |\nabla v| |v|^{p-2} |u| + \int_\Omega \phi^{2} |\nabla \phi| |v|^{p-2} |u|^2 |\nabla \phi| \\
    &\lesssim & \left( \int_\Omega |v|^{p-2} |\nabla v|^2 \right)^{1/2} \left( \int_\Omega \phi^{2} |\nabla \phi|^2 |v|^{p-2} |u|^2 \right)^{1/2} + \frac{1}{r^2} \int_\Omega |v|^{p-2} |u|^2 \phi^{2} \\
    &\lesssim & \left( \int_\Omega |v|^{p-2} |\nabla v|^2 \right)^{1/2} \frac{1}{r} \left( \int_\Omega \phi^{2} |v|^{p-2} |u|^2 \right)^{1/2} + \frac{1}{r^2} \int_\Omega |v|^{p-2} |u|^2 \phi^{2} \nonumber\\
    &\leq & \left( \int_\Omega |v|^{p-2} |\nabla v|^2 \right)^{1/2} \frac{1}{r} \left( \int_{B_{2r}(x)} |u|^p \right)^{1/2} + \frac{1}{r^2} \int_{B_{2r}(x)} |u|^p.\nonumber
  \end{eqnarray}
  For $I_{23}$,
  
  \begin{eqnarray}\nonumber
    I_{23} 
    &\lesssim & \int_\Omega \phi^{p-1} |\nabla \phi| |v|^{p-2}|\nabla v| |u| \\
    &\leq & \left( \int_\Omega |v|^{p-2} |\nabla v|^2 \right)^{1/2} \left( \int_\Omega |v|^{p-2} |u|^2 \phi^{2} |\nabla \phi|^2 \right)^{1/2} \\
    &\lesssim & \left( \int_\Omega |v|^{p-2} |\nabla v|^2 \right)^{1/2} \frac{1}{r} \left( \int_\Omega |u|^p \phi^{2p - 2} \right)^{1/2} \nonumber\\
    &\leq & \left( \int_\Omega |v|^{p-2} |\nabla v|^2 \right)^{1/2} \frac{1}{r} \left( \int_{B_{2r}(x)} |u|^p  \right)^{1/2}.\nonumber
  \end{eqnarray}
  Combine $I_{21}, I_{22}, I_{23}$, we get
  
  \begin{equation}
    |I_2| \lesssim \left( \int_\Omega |v|^{p-2} |\nabla v|^2 \right)^{1/2} \frac{1}{r} \left( \int_{B_{2r}(x)} |u|^p \right)^{1/2} + \frac{1}{r^2} \int_{B_{2r}(x)} |u|^p.
  \end{equation}
  For $I_3$,
  
  \begin{eqnarray} \nonumber
    |I_3| &\lesssim & \frac{1}{r^2} \int_\Omega \phi |v|^{p-1} |u| \\
    &\leq & \frac{1}{r^2} \int_{B_{2r}(x)} |u|^p,
  \end{eqnarray}
 For $I_4$, we have
  
  \begin{eqnarray}
    |I_4| \lesssim \int_\Omega \phi^{j-1} |\nabla \phi| |v|^{p-2} | \overline{u} \nabla v - |u|^2 \nabla \phi^j |.
  \end{eqnarray}
  Hence $I_4$ has same estimate as $I_{22}$. Combining the estimates of $I_1, I_2, I_3, I_4$ together with Theorem \ref{p-ellipticity lower bound}, and observing that $\phi \equiv 1$ on $B_r(x)$, we complete the proof of the Caccioppoli type inequality. Having established the Caccioppoli inequality, the following reverse H\"older's inequality:

\begin{equation}
  \left( \vint_{B_r(x)} |u|^q \right)^{1/q} \lesssim \left( \vint_{B_{2r}(x)} |u|^p \right)^{1/p}, \ p,q \in \left(p_0' , \frac{p_0 n}{n-2}\right)
\end{equation}  
follows directly from a use of the Poincar\'e-Sobolev inequality and an iteration argument. The proof of this part is exactly the same as the proof of either Theorem 1.1 in \cite{DP} or Lemma 3.1 in \cite{FMZ} so we omit the details here. 

We now establish an improvement of this reverse H\"older's inequality, using a convexity argument from Theorem 2.4 of \cite{S}. For any $0 < s < t < 1$, there exists a constant $c$ and a sequence of points $\{ x_l \}$ in $B_{tr}(x)$ such that $B_{2c(t-s)r}(x_l) \subset B_{tr}(x)$ so we have

\begin{eqnarray}\nonumber
  \vint_{B_{sr}(x)} |u|^q
    &\lesssim & \left( \frac{t-s}{s} \right)^n \sum_l \vint_{B_{c(t-s)r}(x_l) } |u|^q \\
    &\lesssim & \left( \frac{t-s}{s} \right)^n \sum_l \left( \vint_{B_{2c(t-s)r} (x_l)} |u|^2 \right)^{q/2} \\
    &\lesssim & \left( \frac{t-s}{s} \right)^n \left( \frac{t}{t-s} \right)^{nq/2} \left( \vint_{B_{tr}(x)} |u|^2 \right)^{q/2}.\nonumber
\end{eqnarray}
Since here $q \geq 2$, we let $\theta$ be such that $\frac{1}{2} = \frac{\theta}{q} + \theta$ so from H\"older's inequality,

\begin{eqnarray}
  \left( \vint_{B_{tr}(x)} |u|^2 \right)^{1/2}
    \leq \left( \vint_{B_{tr}(x)} |u|^q \right)^{\frac{1-\theta}{q}} \left( \vint_{B_{tr}(x)} |u| \right)^{\theta}.
\end{eqnarray}
Define 

\begin{equation}
  I(t) = \frac{\left( \vint_{B_{tr}(x)} |u|^q \right)^{1/q}}{\vint_{B_r (x)} |u|},
\end{equation}  
it suffices to show $I(1/2) \leq C$. From the above estimates, we have

\begin{equation}
  I(s) \lesssim s^{-n/q} t^{n(1/2 - \theta)} (t-s)^{n(\frac{1}{q} - \frac{1}{2})} (I(t))^{1- \theta},
\end{equation}
which implies

\begin{equation}
  \log I(t) \lesssim \log \left( s^{-n/q} t^{n(1/2 - \theta)} (t-s)^{n(\frac{1}{q} - \frac{1}{2})}  \right) + (1 - \theta) \log I(t).
\end{equation}
Choose $s = t^b$ with $b^{-1} > 1-  \theta$ and integrate both sides with respect to $\frac{dt}{t}$ from $1/2$ to $1$, we have

\begin{equation}
  \frac{1}{b} \int_{(1/2)^b}^1 \log I(t) \frac{dt}{t} \leq C + (1 - \theta) \int_{1/2}^1 \log I(t) \frac{dt}{t},
\end{equation}
which means

\begin{equation}
 (b^{-1} - \theta) I(1/2) \lesssim (b^{-1} - \theta)	 \int_{(1/2)^b}^1 \log I(t) \frac{dt}{t} \leq C.
\end{equation}
   To pass to the case when $A$ and $B$ are non-smooth, we let $A_l$ and $B_l$ be smooth approximations of $A$ and $B$ respectively. Let $u_l$ be the solution of $\mathcal{L}_l u_l = 0$, where $\mathcal{L}_l$ is the operator associated with the coefficients $A_l$ and $B_l$, with the same boundary condition as $u$. One can use the same argument as in \cite{DP} to show that $u_l \to u$ strongly in $W^{1,2}_{loc}(\Omega)$. Indeed, such an argument comes from \cite{KP} and it is easy to verify that Theorem 7.5 of \cite{KP} also holds for systems.

\end{proof}

\subsection{Boundary estimate}

We will show that both the Caccioppoli and reverse H\"older inequalities hold for solutions vanishing on an open subset of $\partial \Omega$ in the neighbourhood of such a set.  These estimates play an important role in extrapolation.

\begin{theorem}\label{Boundary estimate}
  Suppose $u \in  W^{1,2}_{loc} (\Omega; \Bbb{C}^m)$ is the weak solution to the equation $\mathcal{L} u = \text{div} (A(x) \nabla u) + B(x) \cdot \nabla u = 0$ on $\Omega$. Let $p_0 = \inf \{ p > 1: A \text{ satisfies condition \eqref{eq:pellintp} } \}$ and suppose the coefficients of $B$ are in $L^\infty_{loc}(\Omega)$ and satisfy

\begin{equation}
  |B_{\alpha \beta}^h (x)| \leq \frac{K}{\delta(x)},
\end{equation}  
where $K = K(n,m,p,\lambda_p)$ is a small enough constant and $\delta(x)$ is the distance of $x$ to $\partial \Omega$. Denote by $B_r(x)$ the ball centered at $x \in \partial \Omega$ with radius $r$. 
Suppose that for a given $x \in \Omega$, there exists $r>0$ such that $\text{Tr} (u) = 0$ on $B_{3r}(x) \cap \partial \Omega$. Then the following two inequalities hold:

\begin{itemize}
  \item (Caccioppoli-type inequality): For any $p \in (p_0', p_0)$ there exists $K>0$ and $C>0$ depending on $n,m,p,\lambda_p$ and $\|A\|_{L^\infty}$ such that
    \begin{equation}
      \vint_{B_r(x) \cap \Omega} |u|^{p-2} |\nabla u|^2  dy \leq \frac{C}{r^2} \vint_{B_{2r}(x) \cap \Omega} |u|^p  dy.
    \end{equation}
    
  \item (Reverse H\"older's inequality) For any $q \in [1, \frac{p_0n}{n-2})$, 
  
  \begin{equation}
    \left( \vint_{B_r(x) \cap \Omega } |u|^q dy \right)^{1/q} \leq C  \vint_{B_{2r} (x) \cap \Omega} |u| dy,
  \end{equation}
  where the constant $C$ depends on $n,m,K,\lambda_p$ and $\|A\|_{L^\infty}$.
\end{itemize} 
\end{theorem}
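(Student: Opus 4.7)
The plan is to follow the interior estimate proof of the previous theorem closely, with two essential modifications dictated by the presence of the boundary. First, the hypothesis $\mathrm{Tr}(u)=0$ on $B_{3r}(x)\cap\partial\Omega$ allows one to use cutoffs $\phi\in C_0^\infty(B_{2r}(x))$ that do \emph{not} vanish near $\partial\Omega\cap B_{2r}(x)$: the product $v:=u\phi^2$ will still lie in $W^{1,2}_0(\Omega;\mathbb{C}^m)$ after trivial extension by zero, so Theorem~\ref{p-ellipticity lower bound} can be applied to it. Second, the drift $B$ is now genuinely singular as $\delta\to 0$ rather than bounded by $K/r$ as was the case when $x$ was interior; this is the crux of the argument and forces both the use of Hardy's inequality and the smallness hypothesis on $K$.

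I would start as in the interior case: choose $\phi$ with $\phi\equiv 1$ on $B_r(x)$ and $|\nabla\phi|\lesssim 1/r$, set $v=u\phi^2$, expand $\mathcal{L}v$ using $\mathcal{L}u=0$, multiply by $|v|^{p-2}\bar v^\alpha$, integrate, and invoke Theorem~\ref{p-ellipticity lower bound} to reach
\[
\lambda_p\int|v|^{p-2}|\nabla v|^2\,dy \;\le\; \Re e\int\langle A\nabla v,\nabla(|v|^{p-2}v)\rangle\,dy \;=\; I_1+I_2+I_3+I_4,
\]
with $I_1,\dots,I_4$ as in the interior proof. The terms $I_2$ and $I_4$ involve only $A$ and $\nabla\phi$, so they are estimated verbatim, producing the usual bound
$\tfrac{1}{r}(\int|v|^{p-2}|\nabla v|^2)^{1/2}(\int_{B_{2r}\cap\Omega}|u|^p)^{1/2}+\tfrac{1}{r^2}\int_{B_{2r}\cap\Omega}|u|^p$.

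The main obstacle is the drift contributions $I_1=\int\langle B\nabla v,|v|^{p-2}v\rangle$ and $I_3=-\int B^h\nabla_h\phi^2\,|v|^{p-2}u\bar v$, for which the interior bound $|B|\lesssim 1/r$ is unavailable. For $I_1$, Cauchy--Schwarz together with $|B|\le K/\delta$ gives
\[
|I_1|\;\le\;K\Big(\int\tfrac{|v|^p}{\delta^2}\Big)^{1/2}\Big(\int|v|^{p-2}|\nabla v|^2\Big)^{1/2}.
\]
Since $v\in W^{1,2}_0(\Omega;\mathbb{C}^m)$, for $p\ge 2$ the function $|v|^{p/2}$ also belongs to $W^{1,2}_0(\Omega)$, and the Hardy inequality for Lipschitz domains yields $\int|v|^p/\delta^2\lesssim\int|\nabla(|v|^{p/2})|^2\lesssim\int|v|^{p-2}|\nabla v|^2$. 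Hence $|I_1|\le C\|A\|_\infty K\int|v|^{p-2}|\nabla v|^2$. An essentially identical application of Hardy reduces $I_3$ to the same template of bounds as $I_2$. Provided $K$ is small enough that $C\|A\|_\infty K\le\lambda_p/2$, the singular $\int|v|^{p-2}|\nabla v|^2$ contribution from $I_1$ absorbs into the left-hand side, and the remaining terms are concluded by Young's inequality exactly as in the interior case, giving the Caccioppoli estimate. The range $p\in(p_0',2)$ is then reached by the duality $u=|w|^{p'-2}w$ with $A\leftrightarrow A^*$, exactly as in the proof of Theorem~\ref{p-ellipticity lower bound}.

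With Caccioppoli in hand, the reverse H\"older inequality is obtained by the same Sobolev-and-iteration scheme used in the interior proof: the Sobolev step is legitimate because the boundary vanishing of $u$ allows $|u|^{p/2}\phi$ to be extended by zero to an element of $W^{1,2}_0(\mathbb{R}^n)$; iteration produces the $L^p\to L^{pn/(n-2)}$ gain up to the stated range $q<\frac{p_0 n}{n-2}$, and Shen's convexity argument (cf.\ Theorem~2.4 of \cite{S}) upgrades the right-hand average to an $L^1$-average. Throughout, an approximation by smoothed $A_l, B_l$ as in \cite{KP} reduces the a priori smoothness assumption to the measurable setting.
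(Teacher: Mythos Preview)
Your proposal is correct and follows essentially the same approach as the paper: reduce to the interior argument, handle $I_2$ and $I_4$ verbatim, and treat the drift terms $I_1,I_3$ via Hardy's inequality applied to $|v|^{p/2}$ so that $I_1$ can be absorbed under the smallness of $K$. One cosmetic slip: the constant in your absorption bound for $I_1$ should not carry a factor of $\|A\|_\infty$, since $I_1$ involves only $B$; the paper records it as $C(n,m,p)K$ and requires $K<\lambda_p/(2C(n,m,p))$.
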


\begin{proof}

We first assume the coefficients of $A$ and $B$ are smooth. Let $\phi \in C_0^ \infty (B_{2r}(\Omega))$ satisfy $0 \leq \phi \leq 1$, $\phi \equiv 1$ on $B_r(x)$ and $|\nabla \phi| \lesssim \frac{1}{r}$. Set $v = u \phi^2$. We will employ a similar argument as used for the interior estimate. Since, at the boundary, we have $\delta (x) < r$, the only difference will be the estimate of those integrals involving $B$, i.e. terms $I_1$ and $I_3$. For $I_1$, we have

  \begin{eqnarray}\nonumber
    |I_1| &\leq & K \int_\Omega \frac{|\nabla v| |v|^{p-1} }{\delta(x)} \\\nonumber
    &\leq & K \left( \int_\Omega |v|^{p-2} |\nabla v|^2 \right)^{1/2} \left( \int_\Omega \frac{|v|^p}{\delta(x)^2} \right)^{1/2} \\
    &\leq & C(n,p) K \left( \int_\Omega |v|^{p-2} |\nabla v|^2 \right)^{1/2} \left( \int_\Omega |\nabla |v|^{\frac{p}{2}}|^2 \right)^{1/2} \\
    &\leq & C(n,m,p)K \int_\Omega |v|^{p-2} |\nabla v|^2,\nonumber
  \end{eqnarray}   
  where in the second to last line, we apply a one-dimensional Hardy's inequality in a direction transversal to the boundary
  to the function $|v|^{p/2}$.
  The term $I_1$ can be absorbed by $\lambda_p \int_\Omega |v|^{p-2}|\nabla v|^2$ if we choose $K < \frac{\lambda_p}{2 C(n,m,p)}$, since by the $p$-ellipticity,
  
\begin{equation}
  \lambda_p \int_\Omega |v|^{p-2}|\nabla v|^2 \leq \Re e\ \int_\Omega \langle A \nabla v, \nabla (|v|^{p-2} v) \rangle,
\end{equation}
and hence for small $K$ . We now estimate $I_3$:
  
  \begin{eqnarray}\nonumber
    |I_3| &\lesssim & \frac{K}{r} \int_\Omega \phi  |v|^{p-1} |u| \frac{1}{\delta(x)} \\
    &\leq & \frac{K}{r} \left( \int_\Omega \frac{|v|^p}{\delta(x)^2} \right)^{1/2} \left( \int_\Omega \phi^{2} |v|^{p-2} |u|^2 \right)^{1/2} \\
    &\lesssim & \frac{K}{r} \left( \int_\Omega |\nabla |v|^{\frac{p}{2}}|^2 \right)^{1/2} \left( \int_{\Omega} \phi^{2p-2} |u|^p \right)^{1/2} \nonumber\\
    &\lesssim & \frac{K}{r} \left( \int_\Omega |v|^{p-2} |\nabla v|^2 \right)^{1/2} \left( \int_{B_{2r}(x)} |u|^p \right)^{1/2},\nonumber
  \end{eqnarray}
  where for the first term of the second to last line, we apply the Hardy's inequality. Once we have Caccioppoli inequality, since we assume $\text{Tr} (u) = 0$ on $B_{3r}(x) \cap \partial \Omega$, we can still apply the Poincar\'e-Sobolev inequality at the boundary. Then we employ an iteration argument that is exactly the same as the one used in the interior estimate to obtain the reverse H\"older inequality.
\end{proof}

\section{$L^p$-Dirichlet problem and the extrapolation theory}\label{Dir}

This section is devoted to the extrapolation theory of the $L^p$-Dirichlet problem. We are going to show that given the solvability of the $L^p$-Dirichlet problem for some specific $p>1$, one can extrapolate such solvability to a larger exponent. To begin with, consider the operator $( \mathcal{L} u )_\alpha = \partial_h (A^{hk}_{\alpha \beta}(x) \partial_k u^\beta) + B^h_{\alpha \beta} \partial_h (x) u^\beta$ where the tensor $A (x) = (A^{hk}_{\alpha \beta}) (x)$ is elliptic and uniformly bounded and $B (x) = (B^h_{\alpha \beta} (x))$ satisfies $|B^h_{\alpha \beta} (x)| \leq \frac{K}{\delta(x)}$ with $K$ small enough (to be chosen later). For a Lipschitz domain $\Omega$, consider the sesquilinear form $\mathcal{B} : H^1 (\Omega; \Bbb{C}^m) \times H^1_0 (\Omega;\Bbb{C}^m) \to \Bbb{C}$ defined as following:

\begin{equation}\label{eq:sesq}
  \mathcal{B} [u,v] = \int_\Omega \langle A^{hk}_{\alpha \beta} \nabla_h u^\alpha , \nabla_k v^\beta  \rangle + \langle B^h_{\alpha \beta} \nabla_h u^\alpha , v^\beta \rangle dx, 
\end{equation} 
where $H^1 (\Omega; \Bbb{C}^m)$ is the $W^{1,2}$-Sobolev space equipped with the seminorm $||u||^2_{H^1} = \int_\Omega |\nabla u|^2 dx$ and $H^1_0 (\Omega;\Bbb{C}^m)$ is the closure of $C_0^\infty (\Omega; \Bbb{C}^m)$ with respect to the same seminorm. The sesquilinear form $\mathcal{B}$ is bounded from above since $A$ is uniformly bounded and the second term in the definition of $\mathcal{B}$ can be bounded using Hardy's inequality:

\begin{eqnarray}\nonumber
  \left\vert \int_\Omega  \langle B^h_{\alpha \beta} \nabla_h u^\alpha , v^\beta \rangle dx \right\vert 
    &\leq& K \left( \int_\Omega |\nabla u|^2 \right)^{1/2} \left( \int_\Omega \frac{|v|^2}{\delta (x)^2} \right)^{1/2}  \\
    &\leq& C(n)K\left( \int_\Omega |\nabla u|^2 \right)^{1/2} \left( \int_\Omega |\nabla v|^2 \right)^{1/2}.
\end{eqnarray}

On the other hand, throughout this paper we assume that the tensor $A$ satisfies the integral ellipticity condition, i.e.

\begin{equation}
 \Re e \int_\Omega \langle A^{hk}_{\alpha \beta} \nabla_h u^\alpha , \nabla_k u^\beta  \rangle \geq \lambda \int_\Omega |\nabla u|^2 \text{ for every } u \in H_0^1 (\Omega; \Bbb{C}^m),
\end{equation}
and therefore we have 

\begin{equation}
  | \mathcal{B}[u,u] | \geq (\lambda - K C(n)) \int_\Omega |\nabla u|^2,
\end{equation}
where $C(n)$ comes from the Hardy inequality. Thus if we further choose $K < \frac{\lambda}{C(n)}$, the sesquilinear form $\mathcal{B}$ is coercive. We denote $ \dot{B}^{2,2}_{1/2}(\partial \Omega; \Bbb{C}^m)$ as the trace of functions in $H^1(\Omega;\Bbb{C}^m)$, then for any $f \in \dot{B}^{2,2}_{1/2}(\partial \Omega; \Bbb{C}^m)$, there exists $v \in H^1(\Omega;\Bbb{C}^m)$ such that $\text{Tr}(v) = f$. Writing $u =u_0 + v$, we seek for $u_0 \in H^1_0 (\Omega;\Bbb{C}^m)$ such that 

\begin{equation}
  \mathcal{B}[u_0, w] = - \mathcal{B}[v,w] \text{ for any } w\in H^1_0(\Omega;\Bbb{C}^m).
\end{equation}
By the Lax-Milgram lemma, such a $u_0$ exists and is unique. This implies the existence and uniqueness of $u \in H^1(\Omega;\Bbb{C}^m)$ satisfying $\mathcal{L} u = 0$ and $\text{Tr} (u) = f $ and we call this $u$ the {\it energy solution}.

\medskip

To formulate the $L^p$-Dirichlet problem for elliptic systems, we need to introduce the $L^p$-averaged nontangential maximal function $\tilde{N}_{p,a}$. Unlike the real scalar equation, in our setting there is no De Giorgi-Nash-Moser regularity theory, and so the solution $u$ is not necessarily defined pointwise. For any Lipschitz domain $\Omega$ and $u \in L^p_{\text{loc}} (\Omega;\Bbb{C}^m)$, we define the $L^p$-averaged nontangential maximal function $\tilde{N}_{p,a}$ as 

\begin{equation}
  \tilde{N}_{p,a} (u) (Q) = \sup_{x \in \Gamma_a (Q)} u_p (x),
\end{equation}    
where $Q \in \partial \Omega$, $\Gamma_a (Q)$ is the standard non-tangetial cone at $Q$ with aperture parameter $a$ and 

\begin{equation}
  u_p(x) = \left( \vint_{B_{\delta(x)/2} (x)} |u(y)|^p dy \right)^{1/p}.
\end{equation}
The $L^p$-Dirichlet problem can then be formulated as follows:

\begin{definition}
  Let $\Omega$ be a bounded Lipschitz domain, or alternatively, a domain above a Lipschitz graph. Consider the following Dirichlet problem
  
  \begin{equation}
    \begin{cases}
      \mathcal{L} (u) = 0 &\text{ on } \Omega, \\
      u(Q) = f(Q) \ \ \ &\text{ for } \sigma \text{-a.e.} \ Q \in \partial \Omega, \\
      \tilde{N}_{2,a}(u)(Q) \in L^p (\partial \Omega). 
    \end{cases}\label{eq-12}
  \end{equation}
  Given any $1 < p < \infty$, the $L^p$-Dirichlet problem is solvable if for any $f \in L^p (\partial \Omega; \Bbb{C}^m) \cap \dot{B}^{2,2}_{1/2}(\partial \Omega; \Bbb{C}^m)$, the unique energy solution $u$ satisfies
  
  \begin{equation}\label{eq-13}
    ||\tilde{N}_{2,a}(u)||_{L^p (\partial \Omega;d \sigma)} \leq C ||f||_{L^p (\partial \Omega;d \sigma)}
  \end{equation}
  for some constant $C = C(p,\Omega)$.
\end{definition}

\begin{remark}
The above $L^p$-Dirichlet problem associated with the adapted nontangential maximal function was introduced in \cite{DHM,DP}. 
Suppose that
$A$ satisfies \eqref{EllipIC} and let
$$p_0 = \sup \{ p: A \ \text{satisfies condition  \eqref{eq:pellintp} }  \}.$$ It has been shown in \cite{DP,DP2} that for different apertures $a,a'$ in some proper range and $p,q \in [1 , \frac{p_0 n}{n-2})$, $\tilde{N}_{p,a}(u)$ and $\tilde{N}_{q,a'}(u)$ are comparable in $L^r (\partial \Omega)$ for any $r > 0$. Hence,  in \eqref{eq-12} there is no difference between considering $\tilde{N}_{2,a}(u)$ and considering $\tilde{N}_{q,a'}(u)$ in the stated range of $q$ and $a'$.

On the other hand, since $L^p (\partial \Omega; \Bbb{C}^m) \cap \dot{B}^{2,2}_{1/2}(\partial \Omega; \Bbb{C}^m)$ is dense in $L^p (\partial \Omega; \Bbb{C}^m)$, by using a density argument and the estimate \eqref{eq-13} it follows that the solution operator $f\mapsto u$ can be extended to the whole space  $L^p (\partial \Omega; \Bbb{C}^m)$.

Moreover, under such extension, the solution $u$ attains its boundary data $f \in L^p (\partial \Omega; \Bbb{C}^m)$ in the following sense

\begin{equation}
  f(Q) = \lim_{x \in \Gamma_a(Q), x \to Q} \vint_{B_{\delta(x)/2} (x)} u(y) dy \qquad \text{for } \sigma{-a.e. }\, Q \in \partial \Omega.
\end{equation}
\end{remark}

\medskip

With these definitions in hand, we can now begin the proof of Theorem \ref{Extrapolation}.

\begin{proof}
With the help of the interior and boundary estimates established in the previous section, the proof of the above theorem is almost identical to the one in \cite{DP2} and we will just sketch it here. Theorem \ref{Extrapolation} relies on a real variable argument developed in \cite{S1} and in our setting, it suffices to prove the following reverse H\"older inequality:

\begin{equation}\label{reverse holder of N}
  \left( \vint_{B_r (Q) \cap \partial\Omega} |\tilde{N}_{2,a}(u)|^q d\sigma\right)^{1/q} \leq C \left( \vint_{B_{2r}(Q) \cap \partial\Omega} |\tilde{N}_{2,a}(u)|^p d\sigma \right)^{1/p},
\end{equation}
where $1 < p \leq q < \frac{p_0(n-1)}{n-2}$, $Q \in \partial 
\Omega$ and the energy solution $u$ vanishes on $B_{4r}(Q) \cap \partial\Omega$. Define 

\begin{eqnarray}\nonumber
  &&\mathcal{M}_1(u)(Q) = \sup_{x \in \Gamma_a(Q), \delta(x) \leq r} u_2(x), \\
  &&\mathcal{M}_2(u)(Q) = \sup_{x \in \Gamma_a(Q), \delta(x) > r} u_2(x),
\end{eqnarray}  
where $u_2 (x) = \left( \vint_{B_{\delta(x)/2} (x)} |u (y)|^2 dy \right)^{1/2}$. After some careful geometric observations, one obtains

\begin{equation}
  \mathcal{M}_2(u)(Q) \leq C \left( \vint_{B_{2r}(Q) \cap \partial\Omega} |\tilde{N}_{2,a} (u) (P)|^p  d\sigma(P) \right)^{1/p}.
\end{equation}
To estimate $\mathcal{M}_1(u)(Q)$, for any $x \in \Gamma_a(Q), \delta(x) \leq r$, we construct a sequence of balls $B_{r_j}(x_j)$ in $\Gamma_a(Q)$ with appropriate scale, such that $x_0 = x$ and $x_j \to Q$. Let $v(x) = |u|^{\frac{p}{2}-1} u(x)$, we estimate each $\vint_{B_{r_j}(x_j)} |v|^2 - \vint_{B_{r_{j+1}}(x_{j+1})} |v|^2$ by making use of the Poincar\'e inequality and then summing. Due to the fact that $u$ vanishes on $B_{4r}(Q) \cap \partial\Omega$, one can verify the following pointwise estimate:

\begin{equation}
  \mathcal{M}_1(v_2)(Q) \leq C r^{1/2} \int_{B_{2r}(Q) \cap \partial\Omega} \frac{A(\nabla v)(P)}{|P - Q|^{n - 3/2} }d\sigma(P),
\end{equation}
where $v_2$ denotes the $L^2$-average of $v$ over $B_{\delta(x)/2} (x)$ and 

\begin{equation}
  A^2(\nabla v)(P) = r^{-1} \int_{\Gamma_{\tilde{a}}^{2r} (P)} \frac{|\nabla v (y)|^2}{\delta(y)^{n-1}} dy,
\end{equation}
where $\Gamma_{\tilde{a}}^{2r}$ is the truncated cone with height $2r$ and appropriate aperture parameter $\tilde{a}$. With this pointwise estimate of $\mathcal{M}_1$ in hand, we apply the Sobolev inequality to the fractional integral and combine it with the boundary estimate (Theorem \ref{Boundary estimate}). The desired estimate follows after we recover back $u$. For further details see \cite{DP2}.
\end{proof}

\begin{remark}
  In fact, the assumptions on domain $\Omega$ can be further relaxed; $\Omega$ can be assumed to be a chord-arc domain. See \cite{DP2} for the necessary modifications in such setting. 
\end{remark}

\section{Application to the Lam\'e system}\label{five}

In this section, we show how the results of the previous section apply to a specific elliptic system, namely the Lam\'e system of linear elasticity. Recall that the variable coefficient Lam\'e system is given by

\begin{eqnarray}\label{eq-Lame-2}
  \mathcal{L} u = \nabla \cdot (\lambda (x) (\nabla \cdot u) I + \mu(x) (\nabla u + (\nabla u)^T)),
\end{eqnarray}
where $u: \Omega \to \Bbb{R}^n$ is a vector-valued function and the bounded measurable functions $\lambda,\mu$ are so-called Lam\'e coefficients describing elastic properties of the material at a given point. If we write \eqref{eq-Lame-2} in the form \eqref{eq:dopS} we see that the corresponding coefficients of $A^{hk}_{\alpha \beta}(x)$, with
$m=n$ throughout, are these:

\begin{eqnarray}\label{eq-Lame-coeff}
  &&A^{hk}_{\alpha \beta}(x) = \mu(x) \delta^{hk} \delta_{\alpha \beta} + \lambda(x) \delta^h_\alpha \delta^k_\beta + \mu(x) \delta^h_\beta \delta^k_\alpha, \qquad B^h_{\alpha \beta} (x) = 0.
\end{eqnarray}
 Here and throughout this section we assume that $\Omega$ is a bounded Lipschitz domain, or alternatively, an unbounded domain in $\mathbb R^n$ above a Lipschitz graph.
 
In \cite[Corollary 1.2]{DHM}, it has been shown that under certain assumptions on the Lam\'e coefficients $\mu(x)$ and $\lambda(x)$, the corresponding $L^p$-Dirichlet problem is solvable for $p \in (2 - \epsilon , \frac{2(n-1)}{n-2}+\epsilon)$, where $\epsilon>0$ is a small positive constant. We assume the same conditions on the ${Lip}_{loc}(\Omega)$ functions $\mu(x)$ and $\lambda(x)$ here. The proof of solvability of the Dirichlet problem relies heavily on the fact that the equation \eqref{eq-Lame-2} can be written as \eqref{eq:dopS} in multiple ways. To be precise, for an auxiliary function $r(x) \in \mbox{Lip}_{loc}(\Omega)$,  equation \eqref{eq-Lame-2} can be also written as
  
\begin{equation}\label{Lame}
  \mathcal{L}(u) = \partial_h ( {A}^{hk}_{\alpha \beta}(r) \partial_k u^\beta ) + {B}^h_{\alpha \beta}(r) \partial_h u^\beta,
\end{equation} 
where

\begin{eqnarray}\label{eq-Lame-coeff-m}
  &&{A}^{hk}_{\alpha \beta}(r) (x) = \mu(x) \delta^{hk} \delta_{\alpha \beta} + ( \lambda(x) + r(x) )\delta^h_\alpha \delta^k_\beta + ( \mu(x) - r(x) ) \delta^h_\beta \delta^k_\alpha, \\\nonumber
  && {B}^h_{\alpha \beta}(r) (x) = \partial_k r(x) (\delta^h_\alpha \delta^k_\beta - \delta^h_\beta \delta^k_\alpha  ).
\end{eqnarray}

The coefficients of \eqref{eq-Lame-coeff-m} enjoy certain special properties that one cannot assume for general systems. First, the Lam\'e coefficients are real, and hence so are the coefficients of  \eqref{eq-Lame-coeff-m} as long as $r$ is chosen to be real. Second, we will take advantage of the symmetry:  ${A}^{hk}_{\alpha \beta}(r)={A}^{kh}_{\beta\alpha}(r)$. As we shall see below, this will simplify the $p$-ellipticity condition that we will impose on the equation.

Finally, we note that when $p=2$, i.e., when we consider the usual ellipticity conditions, it is the case that ${A}(r)$ satisfies the Legendre-Hadamard property when $\mu > 0 $ and $\lambda > -2\mu$ \cite{BM09}. Moreover, ${A}(r)$ is guaranteed to be strongly elliptic if, in addition, it is the case that $0 < r < 2\mu$ and 
$\lambda+2\mu-r\ge 0$.  Strong ellipticity may also hold even when $\lambda+2\mu-r\le 0$ depending on the relationship between
$r$ and $\mu$.  See (1.16) of  \cite{BM09} for these facts.

\subsection{Sufficient conditions.}

Our aim is to figure out when the system  \eqref{Lame}  satisfies the integral $p$-ellipticity condition.  It is not easy to check
the validity of an integral condition. Instead, we resort to checking the strong $p$-ellipticity condition \eqref{EllipLp}. Because $A(r)$ is real and ${A}^{hk}_{\alpha \beta}(r)={A}^{kh}_{\beta\alpha}(r)$ the cross-terms (containing both $\xi$ and $\xi(\omega)$) cancel out.  Thus, we are left with

\begin{eqnarray}\label{EllipLp2}
&&\inf_{|\omega|=1} \left[\left\langle  {A}(r)(x)\xi,\xi  \right\rangle_{\mathbb R^{n\times n}} -\left(1-\frac2p\right)^2\left\langle  {A}(r)(x)\xi(\omega),\xi(\omega)  \right\rangle_{\mathbb R^{n\times n}} \right]
\ge C|\xi|^2.
\end{eqnarray}

Observe also that the expression in the brackets in \eqref{EllipLp2} is simply
\begin{equation}
   \mu |\xi|^2 + (\lambda + r) |\text{Tr } \xi|^2 + (\mu - r) \xi \circ \xi  - \left( 1 - \frac{2}{p} \right)^2 \left( \mu |q|^2 + (\lambda +\mu)\langle \omega, q\rangle^2 \right),
\end{equation}
where $\text{Tr}\xi = \xi_1^1 + ... + \xi_n^n$,  $\xi \circ \xi = \sum_{h,k = 1}^n \xi_h^k \xi_k^h$ and $q=q(\omega)=(q_1,q_2,\dots,q_n)\in\mathbb R^n$
is defined by $q_h=\langle \omega,\xi_h\rangle$.
Hence the condition \eqref{EllipLp2} will be true in the largest possible interval of $p$'s if the following inequality is satisfied:
\begin{eqnarray}\label{EllipLp4}
   \left( 1 - \frac{2}{p} \right)^2 
     &<&\mbox{ess} \inf_{x\in\Omega}\sup_{r \in\mathbb R} \inf_{|\omega|=1} \frac{ \mu(x) |\xi|^2 + (\lambda(x) + r) |\text{Tr } \xi|^2 + (\mu(x) - r) \xi \circ \xi }{ \mu(x) |q|^2 + (\lambda(x) +  \mu(x))\langle \omega, q\rangle^2  }.
\end{eqnarray}

Since the ball $|\omega|=1$ in $\mathbb C^n$ is compact it follows that for a fixed $x\in\Omega$, $\xi \in \Bbb{C}^{n \times n}$ and $r\in\mathbb R$ the infimum over $\omega$ of the expression in \eqref{EllipLp4} is attained. Thus, the $r$ depends on $x$ but we suppress this dependence in the
notation that follows. Let $\omega$ be such minimiser. We claim that without any loss of generality we can assume that $\omega=(1,0,0,\dots,0)^T=e_1$. Indeed, let $R$ be an orthogonal transformation, i.e., an element of SO$(n)$) such that $e_1=R\omega$. Let $\eta=R\xi R^T$. We claim that $(e_1,\eta)$ is also a minimiser of \eqref{EllipLp4}.

To see this, consider first the denominator of the expression \eqref{EllipLp4}. Denote by $\tilde{q}_h=\langle e_1,\eta_h\rangle$. Clearly, 
$$\tilde q:=\eta^T e_1=(R\xi R^T)^TR\omega=R\xi^T\omega=Rq.$$
By orthogonality of $R$ then clearly $|\tilde{q}|=|q|$ and $\langle \omega, q\rangle=\langle R\omega, Rq\rangle=\langle e_1, \tilde{q}\rangle$. It follows that the value of the denominator of \eqref{EllipLp4} for $(\omega,\xi)$ and $(e_1,\eta)$ is the same.  

Similarly, in the numerator we have $|\xi|=|\eta|$, Tr $\xi=$Tr $\eta$ as the matrices $\xi$ and $\eta$ are similar and hence have the same set of eigenvalues. Finally,
$$\eta\circ\eta=\eta_h^k \eta_k^h=R^k_i\xi_i^jR^h_j R^h_m\xi_m^nR^k_n.$$
Because $R$ is orthogonal $R^h_j R^h_m=\delta_{jm}$ and $R^k_iR^k_n=\delta_{in}$. Therefore
$$\eta\circ\eta=R^k_i\xi_i^jR^h_j R^h_m\xi_m^nR^k_n=\xi_i^j\xi_j^i=\xi\circ\xi.$$
It follows that  the value of the numerator of \eqref{EllipLp4} for $(\omega,\xi)$ and $(e_1,\eta)$ is the same as well. Given this the inequality we want to be satisfied is:
\begin{eqnarray}\label{EllipLp5}
   \left( 1 - \frac{2}{p} \right)^2 
     &<&\mbox{ess} \inf_{x\in\Omega}\sup_{r \in\mathbb R} \inf_{|\xi|=1} \frac{ \mu(x) |\xi|^2 + (\lambda(x) + r) |\text{Tr } \xi|^2 + (\mu(x) - r) \xi \circ \xi }{ \mu(x) ((\xi_1^1)^2+(\xi_2^1)^2+\dots+(\xi_n^1)^2) + (\lambda(x) +  \mu(x))(\xi_1^1)^2  }.
\end{eqnarray}

To analyse \eqref{EllipLp5} further let us drop the dependence of $\lambda,\,\mu$ on $x\in\Omega$ by fixing the point $x$. Let us relabel the parameter $r$ so that $\gamma=\mu-r$ and hence $\lambda+r=\lambda+\mu-\gamma$. In this notation we have
\begin{eqnarray}\label{EllipLp6}
   \left( 1 - \frac{2}{p} \right)^2 
     &<&\sup_{\gamma \in\mathbb R} \inf_{|\xi|=1} \frac{ \mu |\xi|^2 + (\lambda+\mu -\gamma) |\text{Tr } \xi|^2 + \gamma (\xi \circ \xi) }{ \mu ((\xi_2^1)^2+\dots+(\xi_n^1)^2) + (\lambda +  2\mu)(\xi_1^1)^2  }.
\end{eqnarray}
Clearly for the last term of the numerator we have
$$\gamma (\xi \circ \xi)\ge \gamma((\xi_1^1)^2+\dots+(\xi_n^n)^2)-|\gamma|\sum_{h\ne k}|\xi_h^k\xi_k^h|.$$
We further split the second sum
$$|\gamma|\sum_{h\ne k}|\xi_h^k\xi_k^h|\le 2|\gamma|\sum_{k=2}^n|\xi_1^k\xi_k^1|+2|\gamma|\sum_{2\le h<k}|\xi_h^k\xi_k^h|\le
2|\gamma|\sum_{k=2}^n|\xi_1^k\xi_k^1|+|\gamma|\sum_{h,k=2}^n(\xi_h^k)^2,$$
where we have used the AG-inequality. We use the same inequality for the penultimate term as well but with uneven weights which will give us
$$|\gamma|\sum_{h\ne k}|\xi_h^k\xi_k^h|\le \mu \sum_{k=2}^n(\xi_1^k)^2 +\frac{\gamma^2}{\mu}\sum_{k=2}^n(\xi_k^1)^2+|\gamma|\sum_{h,k>1,h\ne k}(\xi_h^k)^2.$$
Hence
$$\gamma (\xi \circ \xi)\ge \gamma((\xi_1^1)^2+\dots+(\xi_n^n)^2)-\mu \sum_{k=2}^n(\xi_1^k)^2 -\frac{\gamma^2}{\mu}\sum_{k=2}^n(\xi_k^1)^2-|\gamma|\sum_{h,k>1,h\ne k}(\xi_h^k)^2.$$
It follows that
\begin{eqnarray}\label{EllipLp7}
 && \frac{ \mu |\xi|^2 + (\lambda+\mu -\gamma) |\text{Tr } \xi|^2 + \gamma (\xi \circ \xi) }{ \mu ((\xi_2^1)^2+\dots+(\xi_n^1)^2) + (\lambda +  2\mu)(\xi_1^1)^2  }\ge\\\nonumber
&&\frac{(\mu-|\gamma|)\sum_{h,k>1,h\ne k}(\xi_h^k)^2+(\mu-\frac{\gamma^2}{\mu})\sum_{k=2}^n(\xi_k^1)^2+(\mu+\gamma)\sum_h(\xi_h^h)^2+ (\lambda+\mu -\gamma) |\text{Tr } \xi|^2}{ \mu \sum_{k=2}^n(\xi_k^1)^2 + (\lambda +  2\mu)(\xi_1^1)^2  }.
\end{eqnarray}
Consider first the off-diagonal elements. Observe that the terms $(\xi_1^k)^2$ have been eliminated.
The remaining off-diagonal terms make an appearance in  the first and second term in the numerator and the first term of the denominator of \eqref{EllipLp7}.

Under the assumption that $|\gamma|\le \mu$ (and hence $\mu-|\gamma|,\mu-\frac{\gamma^2}\mu\ge 0$) each remaining off-diagonal element $x=(\xi_h^k)^2$ appears in \eqref{EllipLp7} in the form of a rational function
$x\mapsto \frac{Ax+B}{Cx+D}$ when $A,C\ge0,\,D>0$, which is monotone for nonnegative $x$. Hence, it attains its supremum on $[0,\infty)$ either when $x=0$ (where the value is $\frac{A}C$) or in the limit $x\to\infty$ (where the value is $\frac{B}D$). Applying this observation for each remaining off-diagonal element $(\xi_h^k)^2$ we conclude that
\begin{eqnarray}\label{EllipLp8}
 && \frac{ \mu |\xi|^2 + (\lambda+\mu -\gamma) |\text{Tr } \xi|^2 + \gamma (\xi \circ \xi) }{ \mu ((\xi_2^1)^2+\dots+(\xi_n^1)^2) + (\lambda +  2\mu)(\xi_1^1)^2  }\ge\\\nonumber
&&\qquad\qquad  \min\left\{1-\frac{\gamma^2}{\mu^2},
\frac{(\mu+\gamma)\sum_h(\xi_h^h)^2+ (\lambda+\mu -\gamma) |\text{Tr } \xi|^2}{  (\lambda +  2\mu)(\xi_1^1)^2  }\right\}.
\end{eqnarray}

Clearly, \eqref{EllipLp8} indicates that we only want to choose $\gamma$ such that $|\gamma|<\mu$ so that the right-hand side is positive. This also implies that $\mu+\gamma>0$. The coefficient $\mu+\lambda+\gamma$ can be either positive or negative.\vglue2mm

Let $A=\sum_{h>1}\xi_h^h$. Then
$$|\text{Tr } \xi|^2=(\xi_1^1)^2+2\xi_1^1A+A^2,\quad\mbox{and}\quad
A^2\le (n-1)\sum_{h>1}(\xi_h^h)^2.
$$
It follows that
$$(\mu+\gamma)\sum_h(\xi_h^h)^2+ (\lambda+\mu -\gamma) |\text{Tr } \xi|^2\ge (\lambda+2\mu) (\xi_1^1)^2+2(\lambda+\mu-\gamma)\xi_1^1A+\left(\lambda+\mu-\gamma+\frac{\mu+\gamma}{n-1}\right)A^2.$$

This term must be positive and therefore it implies another condition $\gamma$ must satisfy. 
\begin{equation}\label{eq-cgamma}
(\lambda+2\mu)(\lambda+\mu-\gamma+\frac{\mu+\gamma}{n-1})>(\lambda+\mu-\gamma)^2,\quad\mbox{and}\quad \gamma<\frac{n-1}{n-2}\lambda+\frac{n}{n-2}\mu.
\end{equation}
Given this
\begin{equation}\label{eq-77}
(\mu+\gamma)\sum_h(\xi_h^h)^2+ (\lambda+\mu -\gamma) |\text{Tr } \xi|^2\ge\left[ \lambda+2\mu-\frac{(\lambda+\mu-\gamma)^2}{\lambda+\mu-\gamma+\frac{\mu+\gamma}{n-1}}\right](\xi_1^1)^2.
\end{equation}
Hence by \eqref{EllipLp8} we have

\begin{eqnarray}\label{EllipLp9}
 && \frac{ \mu |\xi|^2 + (\lambda+\mu -\gamma) |\text{Tr } \xi|^2 + \gamma (\xi \circ \xi) }{ \mu ((\xi_2^1)^2+\dots+(\xi_n^1)^2) + (\lambda +  2\mu)(\xi_1^1)^2  }\ge\\\nonumber
&&\qquad\qquad  \min\left\{1-\frac{\gamma^2}{\mu^2},1-\frac{(\lambda+\mu-\gamma)^2}{(\lambda+2\mu)(\lambda+\mu-\gamma+\frac{\mu+\gamma}{n-1})
 }\right\}.
\end{eqnarray}
Notice that there is a tension between the two terms inside the minimum. The first one is largest at $\gamma=0$, while the second one at $\gamma=\lambda+\mu$ with the largest value $1$. However, in the special case when $\mu=-\lambda$ both attain its maximum at $\gamma=0$.

From this we first derive a dimension independent bound. Given the restriction $|\gamma|<\mu$ we see that
\begin{equation}\label{eq-78}
1-\frac{(\lambda+\mu-\gamma)^2}{(\lambda+2\mu)(\lambda+\mu-\gamma+\frac{\mu+\gamma}{n-1})}
> 1-\frac{\lambda+\mu-\gamma}{\lambda+2\mu}=\frac{\mu+\gamma}{\lambda+2\mu}>0.
\end{equation}
This is only correct when $\lambda+\mu-\gamma\ge 0$. Thus we consider two cases.\vglue2mm

\noindent {\bf Case 1: $\lambda+\mu <0$.} Here the conditions we have for $\gamma$ imply that $-\mu<\gamma\le \lambda+\mu<0$. On this interval the function $1-\frac{\gamma^2}{\mu^2}$
attains its maximum at the endpoint $\gamma=\lambda+\mu$. But this endpoint is also the maximum of
the first term of \eqref{eq-78} with value $1$. In reality we might do slightly better because, in \eqref{eq-78}, we have neglected a small positive term in the denominator and so a choice $\gamma=\lambda+\mu+\varepsilon$ will work better for \eqref{EllipLp9}.

This leads us to conclude that
in this case if
\begin{eqnarray}\label{EllipLp10a}
   \left( 1 - \frac{2}{p} \right)^2 
     &<&1-\left(\frac{\lambda+\mu}{\mu+\varepsilon}\right)^2,
\end{eqnarray}
we have that \eqref{EllipLp2} holds for $r\approx-\lambda$ and some small $\varepsilon>0$.\vglue2mm

\noindent {\bf Case 2: $\lambda+\mu \ge 0$.}  Here $\gamma$ can take values between
$-\mu$ and $\lambda+\mu$ (this interval contains zero). We observe that the function on the righthand side of \eqref{eq-78} is monotone increasing in $\gamma$. Also $1-\frac{\gamma^2}{\mu^2}$ attains the same value at $\gamma$ and $-\gamma$. These two facts imply that to find an optimal value of $\gamma$  maximising 
$$\min\left\{\frac{\mu+\gamma}{\lambda+2\mu},1-\frac{\gamma^2}{\mu^2}\right\}$$
it suffices to look for $\gamma$ in the interval $[0,\lambda+\mu]$. Since one function increases in this interval and the other one is decreasing there will be one point in the interval where the values of these two functions are equal. At such point:

$$1-\frac{\gamma^2}{\mu^2}=\frac{(\mu-\gamma)(\mu+\gamma)}{\mu^2}=\frac{\mu+\gamma}{\lambda+2\mu}\quad\Rightarrow\quad \gamma=\mu-\frac{\mu^2}{\lambda+2\mu}.$$
Hence
$$1-\frac{\gamma^2}{\mu^2}=\frac{\mu+\gamma}{\lambda+2\mu}=\frac{2\mu}{\lambda+2\mu}
-\frac{\mu^2}{(\lambda+2\mu)^2}=\frac{\mu(2\lambda+3\mu)}{(\lambda+2\mu)^2}=\frac{(\lambda+2\mu)^2-(\lambda+\mu)^2}{(\lambda+2\mu)^2}=1-\left(\frac{\lambda+\mu}{\lambda+2\mu}\right)^2.$$

Again by the same consideration because of the sharp inequality in \eqref{eq-78} there might
be a small improvement in what we've just calculated. It follows that for $\lambda$ and $\mu$ such that
\begin{eqnarray}\label{EllipLp10b}
   \left( 1 - \frac{2}{p} \right)^2 
     &<&1-\left(\frac{\lambda+\mu}{\lambda+2\mu +\varepsilon}\right)^2,
\end{eqnarray}
we have that \eqref{EllipLp2} holds for $r\approx\frac{\mu^2}{\lambda+2\mu}$ and some small $\varepsilon>0$.\vglue2mm

Merging these two cases we see that there exists $r\in\mathbb R$ and some small $\varepsilon>0$ such that  \eqref{EllipLp2}
holds for all $p$ given by the inequality
\begin{eqnarray}\label{EllipLp10}
   \left( 1 - \frac{2}{p} \right)^2 
     &<&1-\left(\frac{\lambda+\mu}{\max\{\mu,\lambda+2\mu\} +\varepsilon}\right)^2,
\end{eqnarray}
\vglue2mm

Another interesting special case is when $n=2$. In this case,
$$1-\frac{(\lambda+\mu-\gamma)^2}{(\lambda+2\mu)(\lambda+\mu-\gamma+\mu+\gamma)}
= 1-\frac{(\lambda+\mu-\gamma)^2}{(\lambda+2\mu)^2}.
$$
Again, we find $\gamma$ for which this term equals to $1-\frac{\gamma^2}{\mu^2}$, and find that the optimal $\gamma$ is
$$\gamma=\frac{\mu(\lambda+\mu)}{\lambda+3\mu},\quad\Rightarrow\quad 1-\frac{(\lambda+\mu-\gamma)^2}{(\lambda+2\mu)^2}=1-\frac{\gamma^2}{\mu^2}=1-\left(\frac{\lambda+\mu}{\lambda+3\mu}\right)^2.$$
It follows that when $n=2$ for $\lambda$ and $\mu$ such that
\begin{eqnarray}\label{EllipLp11}
   \left( 1 - \frac{2}{p} \right)^2 
     &<&1-\left(\frac{\lambda+\mu}{\lambda+3\mu}\right)^2,
\end{eqnarray}
we have that \eqref{EllipLp2} holds for $r=\frac{2\mu^2}{\lambda+3\mu}$. Observe that since
$\lambda+3\mu\ge \max\{\mu,\lambda+2\mu\}$ we see that \eqref{EllipLp11} is always an improvement of the estimate \eqref{EllipLp10}.

Finally, all of the above calculations can be made precise in all dimensions, with dimension-dependent bounds on $(1-2/p)^2$. We start by setting 

\begin{equation}
  1-\frac{\gamma^2}{\mu^2}=1-\frac{(\lambda+\mu-\gamma)^2}{(\lambda+2\mu)(\lambda+\mu-\gamma+\frac{\mu+\gamma}{n-1})
 }.
\end{equation}
For simplicity, if we rewrite $x = \frac{\gamma}{\mu}$ and $a = \frac{\lambda}{\mu}$, then we obtain the following equation

\begin{equation}
  \frac{n-2}{n-1} x^3 + \left( \frac{1}{a+2} - a - \frac{n}{n-1} \right)x^2 - \frac{2(a+1)}{a+2} x + \frac{(a+1)^2}{a+2}=0.
\end{equation}
Observe that when $n = 2$, this reduces to the previous case.

When $n > 2$, in order to solve the cubic equation, we notice that $x = -1$ is a solution. We are not interested in this solution as this implies choosing $\gamma=-\mu$ which does not give ellipticity even for $p=2$. After long division and using quadratic formula, we obtain the other two solutions:

\begin{eqnarray}\nonumber
  x &=& \frac{-(n-1)(a+1)(a+3) \pm (a+1) \sqrt{ (n-1)^2 a^2 + 2(n+1)(n-1)a + (n+7)(n-1) }}{-2(n-2)(a+2)} \\
    &=& \frac{n-1}{2(n-2)}\frac{a+1}{a+2} \left( (a+3) \pm \sqrt{a^2 + \frac{2(n+1)}{n-1}a + \frac{n+7}{n-1} } \right) \\
    &=& \frac{n-1}{2(n-2)} \frac{\lambda + \mu }{\mu (\lambda + 2\mu)} \left( (\lambda + 3\mu) \pm \sqrt{\lambda^2 + \frac{2(n+1)}{n-1} \lambda \mu + \frac{n+7}{n-1} \mu^2} \right) \nonumber\\
    &=& \frac{n-1}{2(n-2)} \frac{\lambda + \mu }{\mu (\lambda + 2\mu)} \left( (\lambda + 3\mu) \pm \sqrt{(\lambda + \mu)^2 + \frac{4\mu(\lambda + 2\mu )}{n-1}} \right).\nonumber
\end{eqnarray}
Since we assume $\lambda > -2\mu$, both $\lambda + 3\mu$ and $\lambda + 2\mu$ are positive, therefore we consider the  positive solution. 

\begin{eqnarray}\label{EllipLp10b}
   \left( 1 - \frac{2}{p} \right)^2 
     &<
     1-\left(1+\frac1{n-2}\right)^2\left(\frac{\lambda+\mu}{\mu(\lambda+2\mu)}\right)^2\left[\frac{\lambda+3\mu}2-\sqrt{\left(\frac{\lambda+\mu}2\right)^2+\frac{\mu(\lambda+2\mu)}{n-1}}\right]^2,
\end{eqnarray}
then \eqref{EllipLp2} holds for some $r\in(0,2\mu)$.

\subsection{Necessary condition.}

If the system  \eqref{Lame}  satisfies the integral $p$-ellipticity condition then, for some $C>0$,
\begin{equation}\label{EllipLHp-LM}
\left\langle
(A^{hk}(x)q_h q_k)\xi,\xi\right\rangle-\left(1-\frac2p\right)^2\left\langle
(A^{hk}(x)q_h q_k)\omega\langle\omega,\xi\rangle,\omega\langle\omega,\xi\rangle\right\rangle \ge C |\xi|^{2}|q|^2,
\end{equation}
must hold for all $\xi,\omega \in{\mathbb R}^{m}$, $q\in{\mathbb R}^{n}$ with $|\omega|=1$, and a.e.  $x\in\Omega$ by (ii) of Theorem \ref{p-ellipticity AT}. By \eqref{eq-Lame-coeff-m} we have that 
\eqref{EllipLHp-LM} (dropping dependence on $x$) can be written as
\begin{equation}
    \mu |\xi|^2|q|^2 + (\lambda +\mu)\langle q,\xi\rangle^2 -\left( 1 - \frac{2}{p} \right)^2\langle\omega,\xi\rangle^2 \left[ \mu |q|^2 + (\lambda +\mu)\langle q,\omega\rangle^2\right]\ge C|\xi|^2|q|^2.
\end{equation}
Observe that this does not depend on the choice of $r$ in \eqref{eq-Lame-coeff-m}. Hence we must have
\begin{eqnarray}\label{EllipLp12}
   \left( 1 - \frac{2}{p} \right)^2 
     &<&\mbox{ess} \inf_{x\in\Omega} \inf_{|\omega|=|\xi|=|q|=1} \frac{ \mu(x)|\xi|^2|q|^2 + (\lambda(x) +  \mu(x))\langle q,\xi\rangle^2 }{\langle\omega,\xi\rangle^2 ( \mu(x)|q|^2 + (\lambda(x) +  \mu(x))\langle q,\omega\rangle^2)  }.
\end{eqnarray}

Here we have used the fact that in \eqref{EllipLp12} both the numerator and denominator scale identically with respect to $|\xi|$ and $|q|$ and so we may assume that $|\xi|=|q|=1$. Since the set $|\omega|=|\xi|=|q|=1$ is compact, the minimiser of \eqref{EllipLp12} exists for a fixed $x\in\Omega$. As in the previous section, if $(\omega,q,\xi)$ is one such minimiser then so is $(R\omega,Rq,R\xi)$ for any $R\in SO(n)$ and therefore we might as well assume that $q=e_1$. Hence the condition \eqref{EllipLp12} simplifies to
\begin{eqnarray}\label{EllipLp13}
   \left( 1 - \frac{2}{p} \right)^2 
     &<&\inf_{|\omega|=|\xi|=1} \frac{ \mu|\xi|^2 + (\lambda +  \mu)(\xi_1)^2 }{(\sum\omega_i\xi_i)^2 ( \mu + (\lambda +  \mu)(\omega_1)^2)  }.
\end{eqnarray}
Since $|\omega_1|\le 1$ we see that $\mu + (\lambda +  \mu)(\omega_1)^2>0$. Notice that the numerator does not depend on $\omega$ and $\omega_2,\omega_3,\dots$ only appear in the term 
$(\sum\omega_i\xi_i)^2$. We want to pick $\omega_2,\omega_3,\dots$ to maximise this term.

Let $A=\sum_{k>1}\omega_i\xi_i$. Then
$$(\sum \omega_i\xi_i)^2=(\omega_1\xi_1+A)^2 ,\qquad\mbox{and}\quad
|A|^2\le (1-(\omega_1)^2)(|\xi|^2-(\xi_1)^2)$$
by the Cauchy-Schwarz inequality. Hence if we set $(\tilde\omega_1,\tilde\omega_2)=(|\omega_1|,\sqrt{1-(\omega_1)^2})$ and $(\tilde\xi_1,\tilde\xi_2)=(|\xi_1|,\sqrt{|\xi|-(\xi_1)^2})$ we see that
\begin{eqnarray}\label{EllipLp14}
 && \frac{ \mu|\xi|^2 + (\lambda +  \mu)(\xi_1)^2 }{(\sum\omega_i\xi_i)^2 ( \mu + (\lambda +  \mu)(\omega_1)^2)  }\ge \frac{ \mu(\tilde\xi_2)^2 + (\lambda +  2\mu)(\tilde\xi_1)^2 }{(\tilde\omega_1\tilde\xi_1+\tilde\omega_2\tilde\xi_2)^2 ( \mu + (\lambda +  \mu)(\tilde\omega_1)^2)  },
\end{eqnarray}
so that \eqref{EllipLp13} holds if 
\begin{eqnarray}\label{EllipLp132d}
   \left( 1 - \frac{2}{p} \right)^2 
     &<&\inf_{|\tilde\omega|=|\tilde\xi|=1} \frac{ \mu(\tilde\xi_2)^2 + (\lambda +  2\mu)(\tilde\xi_1)^2 }{(\tilde\omega_1\tilde\xi_1+\tilde\omega_2\tilde\xi_2)^2 ( \mu + (\lambda +  \mu)(\tilde\omega_1)^2)  }.
\end{eqnarray}
But now $\tilde\xi,\,\tilde\omega\in\mathbb R^2$ and we have reduced the original problem to two dimensions. Let $C_p=(1-2/p)^2$. We rewrite the inequality above as

\begin{eqnarray}\nonumber
0&<& \left[\lambda +  2\mu-C_p\left[\mu+(\lambda +  \mu)(\tilde\omega_1)^2\right](\tilde\omega_1)^2
\right](\tilde\xi_1)^2+\left[\mu-C_p\left[\mu+(\lambda +  \mu)(\tilde\omega_1)^2\right](\tilde\omega_2)^2
\right](\tilde\xi_2)^2\\\label{EllipLp14}
&&\qquad-2C_p\left[\mu+(\lambda +  \mu)(\tilde\omega_1)^2\right]\tilde\omega_1\tilde\omega_2\tilde\xi_1\tilde\xi_2,
\end{eqnarray}
which is a quadratic form in $\tilde\xi$. In order for this form to be positive definite we have to satisfy
\begin{eqnarray}\label{EllipLp15}
0&<&\lambda +  2\mu-C_p\left[\mu+(\lambda +  \mu)(\tilde\omega_1)^2\right](\tilde\omega_1)^2,\\
\left[C_p\left[\mu+(\lambda +  \mu)(\tilde\omega_1)^2\right]\tilde\omega_1\tilde\omega_2\right]^2&<&
\nonumber \left[\lambda +  2\mu-C_p\left[\mu+(\lambda +  \mu)(\tilde\omega_1)^2\right](\tilde\omega_1)^2\right]\times\\\nonumber &&\qquad\qquad\qquad\left[\mu-C_p\left[\mu+(\lambda +  \mu)(\tilde\omega_1)^2\right](\tilde\omega_2)^2
\right],
\end{eqnarray}
for any $|\tilde\omega|=1$. To simplify the notation further let $\gamma_p=C_p\left[\mu+(\lambda +  \mu)(\tilde\omega_1)^2\right]$ and rewrite \eqref{EllipLp15} as
\begin{eqnarray}\label{EllipLp15a}
0&<&\lambda +  2\mu-\gamma_p(\tilde\omega_1)^2,\\
\gamma_p^2\left[\tilde\omega_1\tilde\omega_2\right]^2&<&
\nonumber \left[\lambda +  2\mu-\gamma_p(\tilde\omega_1)^2\right]
\left[\mu-\gamma_p(\tilde\omega_2)^2
\right].
\end{eqnarray}
The second condition can be rewritten as
$$(\lambda+2\mu)\mu-C_p\left[\mu+(\lambda +  \mu)(\tilde\omega_1)^2\right][\mu+(\lambda+\mu)(\tilde\omega_2)^2]>0,$$
for any $\tilde\omega\in\mathbb R^2$ with $|\tilde\omega|=1$.  The minimum of the lefthand side is attained when $|\tilde\omega_1|=|\tilde\omega_2|=1/\sqrt{2}$. It follows that the necessary condition is that
$$(\lambda+2\mu)\mu-C_p\left[\mu+(\lambda +  \mu)/2\right]^2>0.$$
Hence
\begin{equation}
\left(1-\frac2p\right)^2=C_p<\frac{(\lambda+2\mu)\mu}{(\mu+(\lambda +  \mu)/2)^2}=\frac{(2\lambda+4\mu)2\mu}{(\lambda+3\mu)^2}=\frac{(\lambda+3\mu)^2-(\lambda+\mu)^2}{(\lambda+3\mu)^2}.
\end{equation}
It follows that in all dimensions the necessary condition on $\lambda$ and $\mu$ such that the
integral $p$-ellipticity holds is that
\begin{eqnarray}\label{EllipLp11b}
   \left( 1 - \frac{2}{p} \right)^2 
     &<&1-\left(\frac{\lambda+\mu}{\lambda+3\mu}\right)^2,
\end{eqnarray}
Compare this to \eqref{EllipLp11}. Recall that we have dropped the dependance of $\mu$ and $\lambda$ on $x\in\Omega$ and hence \eqref{EllipLp11b} should interpreted as the condition that should hold for every $x\in\Omega$, i.e., the range of $p$ is determined as the infimum over the righthand side of \eqref{EllipLp11b} as $x$ varies over our domain.

It follows that when $n=2$ the necessary and sufficient conditions for the integral $p$-ellipticity condition to hold are the same. For $L^p$-dissipativity this is a known result (c.f Theorem 3.3 of Chapter 3 in \cite{CM17}). Our necessary condition we have proven here is not actually new, see for example again \cite{CM17}) where it is established by other methods in all dimensions.

We summarize the results of the last two subsections in the following theorem. 

\begin{theorem}\label{Prop-Lame} The Lam\'e system 
\begin{equation}\label{Lamedx}
\mathcal Lu=\nabla\cdot\left(\lambda(x)(\nabla\cdot u)I+\mu(x)(\nabla u+(\nabla u)^T) \right)=0
\end{equation}
for an unknown function $u:\Omega\to{\mathbb R}^n$ can be written in an equivalent form as
\begin{equation}\label{Lamexx}{\mathcal L}'u=0,\quad\mbox{where}\quad
\mathcal{L'}u=\left[ \partial_{i} \left({A}_{ij}^{\alpha \beta}(r)(x) \partial_{j} u_{\beta}\right)
+{B}_{i}^{\alpha \beta}(r)(x) \partial_{i}u_{\beta}\right]_{\alpha},
\end{equation}
where the Lipschitz function $r$ was introduced in \eqref{eq-Lame-coeff-m} and
gave rise to coefficients ${B}$ satisfying a simple estimate $|B|\lesssim |\nabla\lambda|+|\nabla\mu|$. Assume that 
$$\mbox{\rm ess }\inf_{x\in\Omega}\{\mu(x),\lambda(x)+2\mu(x)\}>0.$$

The coefficients $A$ of the operator $\mathcal L'$ satisfy the
integral $p$-ellipticity condition \eqref{eq:pellintp} if and only if
\begin{equation}\label{eq-LameIC}
\left(1-\frac2p\right)^2<C(n,\lambda,\mu),
\end{equation}
where
\begin{equation}\label{eq-LameICMa}
C(2,\lambda,\mu)=1-\mbox{\rm ess }\sup_{x\in\Omega}\left(\frac{\lambda+\mu}{\lambda+3\mu}\right)^2,\quad\mbox{for }n=2,
\end{equation}
and for $n\ge 3$ we have estimates for $C(n,\lambda,\mu)$ from above and below by
\begin{equation}\label{eq-LameICMb}
C(n,\lambda,\mu)\le 1-\mbox{\rm ess }\sup_{x\in\Omega}\left(\frac{\lambda+\mu}{\lambda+3\mu}\right)^2,
\end{equation}
\begin{eqnarray}\label{eq-LameICMc}
C(n,\lambda,\mu)&\ge& 1-\mbox{\rm ess }\sup_{x\in\Omega} \textstyle \left(1+\frac1{n-2}\right)^2\left(\frac{\lambda+\mu}{\mu(\lambda+2\mu)}\right)^2\left[\frac{\lambda+3\mu}2-\sqrt{\left(\frac{\lambda+\mu}2\right)^2+\frac{\mu(\lambda+2\mu)}{n-1}}\right]^2\\
&\ge&1-\mbox{\rm ess }\sup_{x\in\Omega}\left(\frac{\lambda+\mu}{\max\{\mu,\lambda+2\mu\} }\right)^2.\label{eq-LameICMd}
\end{eqnarray}
\end{theorem}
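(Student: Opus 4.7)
The plan is to observe that Theorem \ref{Prop-Lame} is a synthesis of the two subsections preceding it, and that the only genuine content beyond those calculations is (a) verifying that the necessary and sufficient conditions coincide when $n=2$, and (b) packaging the dimension-$n$ bounds into a single clean statement. So the proof will proceed by assembling the sufficient bound from the pointwise analysis and then the necessary bound from the Legendre--Hadamard reduction, and finally comparing them.

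For the sufficient direction, I would invoke part (i) of Theorem \ref{p-ellipticity AT}: it suffices to establish the strong $p$-ellipticity condition \eqref{EllipLp}. Using the representation \eqref{eq-Lame-coeff-m}, the real-valuedness of the entries of $\mathbf A(r)$, and the symmetry ${A}^{hk}_{\alpha\beta}(r)={A}^{kh}_{\beta\alpha}(r)$, the cross-terms between $\xi$ and $\xi(\omega)$ drop out and one is left with the quotient inequality \eqref{EllipLp2}. A rotation reduces the infimum over $\omega$ to the case $\omega=e_1$, and then the algebraic manipulations carried out in the sufficient-conditions subsection (splitting off-diagonal entries via the AG-inequality with appropriately chosen weights, and optimizing the auxiliary parameter $\gamma=\mu-r$) produce the explicit lower bound for $C(n,\lambda,\mu)$. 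In the special case $n=2$ no off-diagonal terms involving the second row survive, and the calculation leading to \eqref{EllipLp11} yields the exact value $1-\bigl(\tfrac{\lambda+\mu}{\lambda+3\mu}\bigr)^2$. In general dimension the precise optimum of the minimum of the two competing terms in \eqref{EllipLp9} gives the bound \eqref{eq-LameICMc}, with the coarser but more transparent bound \eqref{eq-LameICMd} following from $\lambda+3\mu \ge \max\{\mu,\lambda+2\mu\}$ and the elementary manipulations in Cases 1 and 2.

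For the necessary direction, I would apply parts (ii) and (iv) of Theorem \ref{p-ellipticity AT}: integral $p$-ellipticity implies the Legendre--Hadamard $p$-ellipticity condition \eqref{EllipLHp}. Written out with the coefficients of \eqref{eq-Lame-coeff-m}, this dependence on $r$ disappears (as $r$ enters only through the skew part contracted against $q_hq_k$, which vanishes), and one obtains the scalar inequality \eqref{EllipLp12}. Again by a rotation one reduces to $q=e_1$, and by the Cauchy--Schwarz step collapsing $\omega_2,\ldots,\omega_n$ and $\xi_2,\ldots,\xi_n$ into single variables one reduces to the two-dimensional quadratic form in $\tilde\xi$ displayed in \eqref{EllipLp14}. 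Demanding positive-definiteness of this form uniformly in $|\tilde\omega|=1$ and optimizing at $|\tilde\omega_1|=|\tilde\omega_2|=1/\sqrt{2}$ gives the upper bound \eqref{eq-LameICMb}, which matches \eqref{EllipLp11b} and is dimension-independent.

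The remaining point is the equivalence $n=2$: here the sufficient lower bound \eqref{EllipLp11} exactly equals the necessary upper bound \eqref{EllipLp11b}, so \eqref{eq-LameICMa} is established. For $n\ge 3$ a gap between the two bounds remains, and the theorem simply records both. I expect the main subtlety to be the bookkeeping in the sufficient direction: keeping track of which AG-weight is used on which off-diagonal term so that the terms $(\xi_1^k)^2$ are cleanly absorbed by $\mu\sum_{k\ge 2}(\xi_k^1)^2$, and then verifying the $\gamma$ that simultaneously saturates the two competing bounds in \eqref{EllipLp9} is the one recorded just above \eqref{EllipLp10b}. Everything else is a direct assembly of the work already done.
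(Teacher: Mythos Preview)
Your proposal is correct and follows essentially the same route as the paper: the theorem is indeed assembled from the sufficient-conditions subsection (strong $p$-ellipticity via rotation to $\omega=e_1$, AG-inequality with uneven weights, and optimization in $\gamma=\mu-r$) and the necessary-conditions subsection (Legendre--Hadamard $p$-ellipticity, rotation to $q=e_1$, Cauchy--Schwarz reduction to two dimensions, optimum at $|\tilde\omega_1|=|\tilde\omega_2|=1/\sqrt2$). One small slip: the $(\xi_1^k)^2$ terms are not absorbed by $\mu\sum_{k\ge 2}(\xi_k^1)^2$ but rather cancel against the corresponding piece of $\mu|\xi|^2$ in the numerator after the weighted AG step, which is why they disappear entirely from the bound \eqref{EllipLp7}.
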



\subsection{Extrapolation of the Lam\'e system.}

We combine Theorem \ref{Prop-Lame} with our extrapolation result (Theorem \ref{Extrapolation}). 
Recall that the oscillation of a real function $f$ over a set $A$ (denoted by $\mbox{osc}_{A}\,f$) is defined by
$$\mbox{\rm osc}_{A}\, f=\sup_Af-\inf_Af.$$
We have the following theorem.

\begin{theorem} Let $\Omega$ be a bounded or unbounded Lipschitz domain. Consider the 
$L^p$ Dirichlet problem for the Lam\'e system
  \begin{equation}\label{LMDP}
    \begin{cases}
      \mathcal Lu=\nabla\cdot\left(\lambda(x)(\nabla\cdot u)I+\mu(x)(\nabla u+(\nabla u)^T) \right)=0. \text{ on } \Omega, \\
      u(x) = f(x) \in L^p (\partial \Omega)\qquad \text{ for } \sigma-a.e. \ x \in \partial\Omega, \\
      \tilde{N}_{2,a}(u) \in L^p (\partial \Omega).
          \end{cases} 
  \end{equation}   
Assume that 
$$\mbox{\rm ess }\inf_{x\in\Omega}\{\mu(x),\lambda(x)+2\mu(x)\}>0,$$
and with $C(n,\lambda,\mu)$ as in Theorem \ref{Prop-Lame} set $p_0=\frac{2}{(1-C(n,\lambda,\mu))^{1/2}}$. Assume that for some
$1<q<\frac{p_0(n-1)}{(n-2)}$ the $L^q$ Dirichlet problem \eqref{LMDP} is solvable. 

Then for every $p\in \left(q,\frac{p_0(n-1)}{(n-2)}\right)$ there exists $K(p)>0$ of the following significance. If
\begin{equation}\label{eq-osc}
\mbox{\rm osc}_{B(x,\delta(x)/2)}\,\lambda+\mbox{\rm osc}_{B(x,\delta(x)/2)}\,\mu\le K(p)\qquad\forall x\in\Omega,
\end{equation}
then the $L^p$ Dirichlet problem for the Lam\'e system
\eqref{LMDP} is solvable. If $\Omega$ is a bounded domain, then the condition \eqref{eq-osc} only has to hold for points with $\delta(x)\le \delta_0$ for some $\delta_0>0$.
\end{theorem}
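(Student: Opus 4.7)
The plan is to apply the extrapolation result (Theorem \ref{Extrapolation}) to the reformulation $\mathcal L'$ of the Lamé system given by \eqref{Lamexx}, which shares its weak solutions with $\mathcal L$. Theorem \ref{Extrapolation} requires two things of $\mathcal L'$: the tensor $A(r)$ must satisfy the integral $p$-ellipticity condition \eqref{eq:pellintp}, and the drift coefficients $B(r)$ must obey $|B(r)(x)| \leq K/\delta(x)$ with $K = K(A,p,n)$ sufficiently small. Our task reduces to producing a single Lipschitz function $r:\Omega\to\R$ that secures both simultaneously.

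For the first condition, Theorem \ref{Prop-Lame} together with the explicit calculations of Section \ref{five} supplies, for each $x \in \Omega$ and each $p$ with $(1-2/p)^2 < C(n,\lambda(x),\mu(x))$, a pointwise optimal value $r_{opt}(x) = \Phi(\lambda(x),\mu(x))$ (namely $r_{opt} \approx -\lambda$ in Case 1 and $r_{opt} \approx \mu^2/(\lambda+2\mu)$ in Case 2) at which $A(r_{opt})(x)$ is strongly $p$-elliptic with a uniform constant $C_p > 0$. Under the standing positivity assumption on $\mu$ and $\lambda + 2\mu$, the map $\Phi$ is locally Lipschitz in $(\lambda,\mu)$. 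However $r_{opt}$ inherits only the $\text{Lip}_{loc}$ regularity of $(\lambda,\mu)$ with no quantitative control on derivatives at the scale $\delta(x)$.

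To secure the drift bound I would replace $r_{opt}$ by a Whitney-scale mollification
\begin{equation*}
r(x) = \int_{\R^n} r_{opt}(y)\, \psi_{\delta(x)/4}(x-y)\, dy, \qquad \psi_\eta(z) = \eta^{-n}\psi(z/\eta),
\end{equation*}
with $\psi$ a standard bump supported in the unit ball. The usual variable-scale estimates give
\begin{equation*}
|\nabla r(x)| \leq \frac{C}{\delta(x)}\osc_{B(x,\delta(x)/2)} r_{opt} \qquad \text{and} \qquad |r(x) - r_{opt}(x)| \leq \osc_{B(x,\delta(x)/4)} r_{opt}.
\end{equation*}
Since $\Phi$ is locally Lipschitz in $(\lambda,\mu)$, hypothesis \eqref{eq-osc} converts both right-hand sides to $O(K(p))$. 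Hence $|B(r)(x)| \lesssim |\nabla r(x)| \leq CK(p)/\delta(x)$, and $A(r)(x)$ differs from the strongly $p$-elliptic $A(r_{opt})(x)$ by a tensor of norm $\lesssim K(p)$. Strong $p$-ellipticity is an open condition on the coefficients (Theorem \ref{p-ellipticity AT}(v) together with the continuous dependence of the ellipticity constant on the tensor), so for $K(p)$ small enough $A(r)(x)$ remains strongly $p$-elliptic pointwise with constant $\geq C_p/2$; Theorem \ref{p-ellipticity AT}(i) then upgrades this to integral $p$-ellipticity.

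With both ingredients secured, Theorem \ref{Extrapolation} extrapolates the $L^q$ solvability of $\mathcal L'$ (which coincides with that of $\mathcal L$) to every $p \in [q, p_0(n-1)/(n-2))$. For bounded $\Omega$, \eqref{eq-osc} is required only on $\{\delta(x)\leq\delta_0\}$, since on the complement $|B(r)|$ is uniformly bounded; this yields $\limsup_{x\to\partial\Omega}|B(r)\delta|\leq CK(p)$, which fits the bounded-domain variant of Theorem \ref{Extrapolation}. The principal technical obstacle is making the perturbation estimate of the third paragraph quantitative and uniform across $\Omega$ so that a single small $K(p)$ simultaneously preserves the ellipticity constant $C_p/2$ and meets the drift smallness threshold of Theorem \ref{Extrapolation}; this is achieved by tracking the explicit dependence of $C_p$ and of the Lipschitz constant of $\Phi$ on $(\lambda,\mu)$ through the formulas of Section \ref{five}, together with the observation that Case 1 and Case 2 patch continuously across $\lambda+\mu=0$ (where both prescriptions give $r_{opt}=\mu$).
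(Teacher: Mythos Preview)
Your approach is correct and arrives at the same destination as the paper, but by a somewhat different route. The paper first mollifies the Lam\'e coefficients themselves at Whitney scale to obtain $\tilde\lambda,\tilde\mu$, then invokes Theorem~\ref{Prop-Lame} twice with a double perturbation argument: first to pass from $A(r)$ to $\widetilde{A}(r)$ (showing $(1-2/s)^2<C(n,\tilde\lambda,\tilde\mu)$), then to extract a new $\tilde r$ built from $\tilde\lambda,\tilde\mu$ (hence automatically satisfying $|\nabla\tilde r|\lesssim|\nabla\tilde\lambda|+|\nabla\tilde\mu|$), and finally to pass back from $\widetilde{A}(\tilde r)$ to $A(\tilde r)$. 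You instead compute $r_{\rm opt}=\Phi(\lambda,\mu)$ from the explicit formulas of Section~\ref{five}, mollify $r_{\rm opt}$ directly, and perturb once. Your path is shorter and more transparent; the paper's has the advantage of treating Theorem~\ref{Prop-Lame} as a black box, but in fact its unexplained estimate $|\nabla\tilde r|\lesssim|\nabla\tilde\lambda|+|\nabla\tilde\mu|$ tacitly relies on exactly the Lipschitz dependence of $\Phi$ on $(\lambda,\mu)$ that you make explicit. Your observation that the Case~1 and Case~2 prescriptions patch $C^1$ across $\lambda+\mu=0$ (both giving $r_{\rm opt}=\mu$ there) is the right way to secure this.

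Two minor corrections. First, your mollification kernel uses the raw distance $\delta(x)$ as scale; since $\delta$ is only Lipschitz, you should replace it by a smooth regularized distance $\rho(x)\approx\delta(x)$ (as the paper does) so that $r$ is genuinely differentiable and the chain-rule estimate for $\nabla r$ goes through cleanly. Second, the stability of strong $p$-ellipticity under $L^\infty$-small perturbations of the tensor is not Theorem~\ref{p-ellipticity AT}(v) (which concerns openness in $p$, not in $A$); it follows directly by expanding the bilinear form in \eqref{EllipLp} and bounding the cross terms by $\|A-A'\|_{L^\infty}|\xi|^2$, using $|\xi(\omega)|\le|\xi|$.
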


\begin{proof} Observe that  $p_0$ as defined here corresponds to $p_0$ as defined in Theorem \ref{Extrapolation} due to Theorem \ref{Prop-Lame}.
Suppose that $\lambda,\mu$ are as above. Consider a pair of mollified Lam\'e coefficients
\begin{equation}\label{eq-lmu}
\tilde{\lambda}(x)= \int_{{\mathbb R}^n} \lambda(y)\varphi_{\rho(x)}(x-y)dy,\quad \tilde{\mu}(x)= \int_{{\mathbb R}^n} \mu(y)\varphi_{\rho(x)}(x-y)dy,
\end{equation}
for $x\in\Omega$.
Here $\varphi$ is a smooth real, nonnegative bump function on ${\mathbb R}^n$ supported in the ball  $B_{1/2}(0)$ such that $\int \varphi=1$ and
$\varphi_t(y) = t^{-n}\varphi(y/t)$. By $\rho(x)$ we denote a mollified distance function (i.e. $\rho(x)\approx\delta(x)$ but $\rho$ is smooth in the interior of $\Omega$). 

If follows that $\tilde\lambda,\,\tilde\mu$ are differentiable in $\Omega$ with
\begin{equation}\label{eq-88}
|\nabla\tilde\lambda(x)|+|\nabla\tilde\mu(x)|\lesssim \frac{K(p)}{\delta(x)},
\end{equation}
where $K(p)$ is as in \eqref{eq-osc}. Additionally, we also have
\begin{equation}\label{eq-221}
\sup_{x\in\Omega}|\lambda-\tilde\lambda|+\sup_{x\in\Omega}|\mu-\tilde\mu|\lesssim K(p).
\end{equation}

Fix some $s$ such that $(1-2/s)^2<C(n,\lambda,\mu)$. As follows from Proposition \ref{Prop-Lame}  there exists some bounded function $r(x)$ such that with coefficients ${A}$ as in \eqref{eq-Lame-coeff-m}
we have for some $C>0$
\begin{equation}\label{eq:pellintp2}
 \int_\Omega \left\langle A(r) \left(\nabla v-\left(1-\frac2s\right)\frac{v}{|v|}\nabla|v|\right) , \nabla v +\left(1-\frac2s\right)\frac{v}{|v|}\nabla|v|\right\rangle dx \geq C \int_\Omega |\nabla v|^2\, dx,
  \end{equation}
for all $v\in W^{1,2}_0(\Omega,\mathbb R^n)$.
Observe that we have an issue with the coefficients $B$ defined as in \eqref{eq-Lame-coeff-m} as our $r$ might not be differentiable. This is where the mollified $\tilde\lambda,\,\tilde\mu$ come into the play.
Let 
\begin{eqnarray}\label{eq-Lame-coeff-2}
  &&\widetilde{A^{hk}_{\alpha \beta}}(r) (x) = \tilde\mu(x) \delta^{hk} \delta_{\alpha \beta} + ( \tilde\lambda(x) + r(x) )\delta^h_\alpha \delta^k_\beta + ( \tilde\mu(x) - r(x) ) \delta^h_\beta \delta^k_\alpha.
\end{eqnarray}  
Observe that by  \eqref{eq-221} we see that $|A(r)-\widetilde{A}(r)|\lesssim K(p)$ and hence by \eqref{eq:pellintp2}
\begin{equation}\nonumber
\int_\Omega \left\langle \widetilde{A}(r) \left(\nabla v-\left(1-\frac2s\right)\frac{v}{|v|}\nabla|v|\right) , \nabla v +\left(1-\frac2s\right)\frac{v}{|v|}\nabla|v|\right\rangle dx \geq C(1-n^4\|A-\widetilde{A}\|_{L^\infty}) \int_\Omega |\nabla v|^2\, dx,
  \end{equation}
which implies that for sufficiently small $K(p)>0$ we get that the operator with coefficients $\widetilde{A}(r)$ satisfies the integral condition  \eqref{eq:pellintp} and hence 
$(1-2/s)^2<C(n,\tilde\lambda,\tilde\mu)$. Therefore Proposition \ref{Prop-Lame} can be applied to the pair $\tilde\lambda,\,\tilde\mu$ and there exists a new function $\tilde{r}$ such that
$|\nabla\tilde r|\lesssim |\nabla\tilde\lambda|+|\nabla\tilde\mu|$ and
\begin{equation}\nonumber
\int_\Omega \left\langle \widetilde{A}(\tilde{r}) \left(\nabla v-\left(1-\frac2s\right)\frac{v}{|v|}\nabla|v|\right) , \nabla v +\left(1-\frac2s\right)\frac{v}{|v|}\nabla|v|\right\rangle dx \geq C_2 \int_\Omega |\nabla v|^2\, dx,
  \end{equation}
holds for some $C_2>0$. Thus, by the same argument as above if $K(p)$ is small enough we might achieve that
\begin{equation}\label{eq:pellintp2b}
 \int_\Omega \left\langle A(\tilde{r}) \left(\nabla v-\left(1-\frac2s\right)\frac{v}{|v|}\nabla|v|\right) , \nabla v +\left(1-\frac2s\right)\frac{v}{|v|}\nabla|v|\right\rangle dx \geq \frac{C_2}2 \int_\Omega |\nabla v|^2\, dx.
  \end{equation}
However, now $\tilde{r}$ is differentiable. Set 
\begin{equation}
{B}^h_{\alpha \beta}(\tilde{r}) (x) = \partial_k \tilde{r}(x) (\delta^h_\alpha \delta^k_\beta - \delta^h_\beta \delta^k_\alpha  ).
 \end{equation}
 Clearly, by \eqref{eq-88} we have that $| B(\tilde r)|\lesssim \frac{K(p)}{\delta(x)}$ and
 for 
$$ \mathcal{L'}u=\left[ \partial_{i} \left({A}_{ij}^{\alpha \beta}(\tilde{r}(x))(x) \partial_{j} u_{\beta}\right)
+{B}_{i}^{\alpha \beta}(\tilde{r}(x))(x) \partial_{i}u_{\beta}\right]_{\alpha},$$
we have $\mathcal L'u=0$ iff $\mathcal Lu=0$. Hence, by making $K(p)$ smaller if necessary we can ensure that Theorem \ref{Extrapolation} applies implying solvability of the $L^p$ Dirichlet problem \eqref{LMDP} for all $q<p<s(n-1)/(n-2)$. From this the claim follows.
\end{proof}

It was shown in \cite[Corollary 1.5]{D20} that the $L^2$ Dirichlet and regularity problems are solvable for the Lam\'e system under the assumption that $\lambda,\,\mu$ satisfy a certain Carleson measure \eqref{Car_lame2} with small constants, under the ellipticity assumption \eqref{Cond-lame}. We can therefore draw the following corollary of this last result.

\begin{corollary} \label{C:lame}
Let $\Omega$ be the Lipschitz domain $\{(x_0,x')\in{\mathbb{R}}\times{\mathbb{R}}^{n-1}:\,x_0>\phi(x')\}$ 
with Lipschitz constant $L=\|\nabla\phi\|_{L^\infty}$. Assume that the Lame coefficients $\lambda,\mu\in L^\infty(\Omega)$ satisfy the following:
\begin{itemize} 
\item[$(i)$] There exists $\mu_0>0$ such that
\begin{equation}\label{Cond-lame}
\mbox{\rm ess }\inf_{x\in\Omega}\{(\sqrt{8}-1)\mu(x)+\lambda(x),(\sqrt{8}+1)\mu(x)-\lambda(x)\}\ge \mu_0.
\end{equation}\item[$(ii)$]
\begin{equation}\label{Car_lame2}
d{\nu}(x)=\left[\left(\osc_{B_{\delta(x)/2}(x)}{\lambda}\right)^{2}+\left(\osc_{B_{\delta(x)/2}(x)}{\mu}\right)^{2}\right]\delta^{-1}(x)
\end{equation}
is a Carleson measure in $\Omega$. 
\end{itemize}
With $C(n,\lambda,\mu)$ as in Proposition \ref{Prop-Lame} consider any 
$2-\varepsilon<p<\frac{2(n-1)}{(n-2)(1-C(n,\lambda,\mu))^{1/2}}$.

Then there exist $\varepsilon=\varepsilon(\mu_0,\|\lambda\|_{L^\infty},\|\mu\|_{L^\infty},n)>0$ and  $K=K(\mu_0,\|\lambda\|_{L^\infty},\|\mu\|_{L^\infty},n,p)>0$ such that if
\begin{equation}\label{Small-Cond2}
\max\big\{L\,,\,\|\nu\|_{\mathcal C}\big\}\leq K
\end{equation}
then $L^p$-Dirichlet problem \eqref{LMDP} for the Lam\'e system is solvable and
the estimate
\begin{equation}\label{Main-Est-LM}
\|\tilde{N}_{p,a} u\|_{L^{p}(\partial \Omega)}\leq C\|f\|_{L^{p}(\partial \Omega;{\mathbb R}^n)}
\end{equation}
holds for all energy solutions $u:\Omega\to {\mathbb R}^n$ with datum $f$. Here $C=C(\mu_0,\|\lambda\|_{L^\infty},\|\mu\|_{L^\infty},n,p)>0$. 
\end{corollary}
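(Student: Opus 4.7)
The plan is to use the $L^2$-solvability from \cite[Corollary~1.5]{D20} as the seed exponent for the extrapolation Theorem~\ref{Extrapolation}, with the integral $p$-ellipticity supplied by Theorem~\ref{Prop-Lame}. The scheme mirrors the proof of the preceding extrapolation theorem for the Lam\'e system, adapted to the Carleson-type hypothesis \eqref{Car_lame2} in place of a uniform pointwise oscillation bound on $\lambda,\mu$.

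First, the stated upper bound on $p$ is exactly $\frac{p_0(n-1)}{n-2}$ for $p_0=2/(1-C(n,\lambda,\mu))^{1/2}$, so by Theorem~\ref{Prop-Lame} the representation \eqref{Lamexx} with coefficients $A(r)$ satisfies the integral $s$-ellipticity \eqref{eq:pellintp} in an open interval of exponents around any such $p$, for some bounded $r=r(x)$. Under $(i)$--$(ii)$ and \eqref{Small-Cond2}, \cite[Corollary~1.5]{D20} furnishes the solvability of the $L^2$-Dirichlet problem for \eqref{LMDP}, which serves as the seed exponent $q=2$ in Theorem~\ref{Extrapolation}. Since the optimal $r$ may be only bounded, we mollify $\lambda,\mu$ as in \eqref{eq-lmu} at scale $\rho(x)\approx\delta(x)/2$, producing smooth $\tilde\lambda,\tilde\mu$ for which $\bigl(|\nabla\tilde\lambda|^2+|\nabla\tilde\mu|^2\bigr)\delta(x)\,dx$ is a Carleson measure of norm $\lesssim\|\nu\|_{\mathcal C}$, while $|\lambda-\tilde\lambda|,|\mu-\tilde\mu|$ are controlled by the corresponding local oscillations. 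Applying Theorem~\ref{Prop-Lame} to $(\tilde\lambda,\tilde\mu)$ then yields a Lipschitz $\tilde r$ such that $A(\tilde r)$ remains integral $s$-elliptic and the associated $B(\tilde r)$ satisfies $|B(\tilde r)|^2\delta\in\mathcal C$ with small Carleson norm. The $L^2$-solvability is preserved when passing from $(\lambda,\mu)$ to $(\tilde\lambda,\tilde\mu)$ by the Carleson perturbation argument of \cite{D20}, so the seed hypothesis of Theorem~\ref{Extrapolation} remains in force for the rewritten operator.

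The main obstacle is that Theorem~\ref{Extrapolation} is stated under the pointwise assumption $|B(x)\delta(x)|\leq K$, whereas we only control $|B(\tilde r)|^2\delta$ as a small Carleson measure. To overcome this, I would revisit the proofs of the Caccioppoli-type and reverse-H\"older boundary estimates of Theorem~\ref{Boundary estimate}: the pointwise bound on $B$ is used only in the Hardy-type absorption of the terms $I_1$ and $I_3$, and each such absorption step can be reproduced under the Carleson hypothesis by invoking the Carleson form of Hardy's inequality,
\[
\int_\Omega|B|^2|v|^2\,dx\lesssim \bigl\|\,|B|^2\delta\,\bigr\|_{\mathcal C}\int_\Omega|\nabla v|^2\,dx\qquad\text{for all }v\in W^{1,2}_0(\Omega;\mathbb C^m),
\]
which restores the absorption with constant $\lesssim\|\nu\|_{\mathcal C}^{1/2}$ in place of $K$. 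The real-variable argument of \cite{DP2,S1} that completes the proof of Theorem~\ref{Extrapolation} is insensitive to this substitution, so applying the adapted extrapolation to the rewritten operator delivers the estimate \eqref{Main-Est-LM} for the full range $2-\varepsilon<p<\frac{2(n-1)}{(n-2)(1-C(n,\lambda,\mu))^{1/2}}$, completing the proof.
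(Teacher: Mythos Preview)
The paper gives no detailed proof of this corollary; it is offered as an immediate consequence of the $L^2$-solvability in \cite[Corollary~1.5]{D20} together with the preceding extrapolation theorem for the Lam\'e system. Your overall plan---feed the $L^2$ result into the extrapolation machinery, with integral $p$-ellipticity supplied by Theorem~\ref{Prop-Lame}---matches this intent, and your identification of the genuine technical point (that Theorem~\ref{Extrapolation} is stated for $|B\delta|\le K$ while the Carleson hypothesis only gives $|B|^2\delta$ small in Carleson norm) together with the proposed fix via a Carleson-weighted Hardy inequality is exactly what is needed and is more explicit than anything the paper writes down.

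There is, however, one ambiguity that deserves attention. You apply Theorem~\ref{Prop-Lame} to the mollified pair $(\tilde\lambda,\tilde\mu)$ and obtain a smooth $\tilde r$; it is then unclear whether your ``$A(\tilde r)$'' is built with the \emph{original} $(\lambda,\mu)$ or with $(\tilde\lambda,\tilde\mu)$. If the latter, then the operator you extrapolate is the mollified Lam\'e operator $\tilde{\mathcal L}$, and you must still transfer the $L^p$ conclusion back to the original $\mathcal L$ (another Carleson perturbation, which you do not state). If the former, note that the paper's own passage ``$A(\tilde r)$ remains integral $s$-elliptic'' in the proof of the preceding theorem used $\|A(\tilde r)-\tilde A(\tilde r)\|_{L^\infty}\lesssim K(p)$, i.e.\ \emph{pointwise} smallness of $\osc\lambda+\osc\mu$, which the Carleson hypothesis alone does not provide. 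The clean way out in the Lam\'e setting is to exploit the special structure of the $r$-dependence: the difference $A(r)-A(\tilde r)$ equals $(r-\tilde r)(\delta^h_\alpha\delta^k_\beta-\delta^h_\beta\delta^k_\alpha)$, and for $\eta=\frac{v}{|v|}\nabla|v|$ one checks $(\text{Tr}\,\eta)^2=\eta\circ\eta$, so the $\eta$-term in \eqref{eq:pellintp} is unaffected; the remaining discrepancy $\int(r-\tilde r)\bigl[(\mathrm{div}\,v)^2-\partial_kv^h\partial_hv^k\bigr]$ is a null-Lagrangian that integrates by parts onto $\nabla\tilde r$ paired with $v\otimes\nabla v$, and is then absorbed by the same Carleson-Hardy inequality you already invoke for $B(\tilde r)$. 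With that observation, one works directly with the original operator $\mathcal L=\mathcal L'$, and the line about ``$L^2$-solvability being preserved when passing to $(\tilde\lambda,\tilde\mu)$'' becomes unnecessary: the rewritten operator is identically $\mathcal L$, so \cite{D20} supplies the seed $q=2$ without any perturbation.
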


\noindent{\it Remark.}  Observe that the condition \eqref{Cond-lame} implies that
$$1-\sqrt{8}<\lambda/\mu<1+\sqrt{8},$$
which by \eqref{eq-LameICMc} implies a particular lower bound on the value of $C(n,\lambda,\mu)$. In particular, it implies that the solvability range $p\in (2-\varepsilon,p(n))$ in the above Corollary for the $L^p$ Dirichlet problem is at least:
$$p(2)=\infty,\qquad p(3)>11.50,\quad p(4)>8.055,\qquad p(n)>\frac{2(n-1)}{(n-2)(1-\sqrt{8\sqrt{2}-11})}\approx \frac{4.546(n-1)}{n-2}.$$\vglue2mm

\noindent{\it Remark 2.} We first recall that, under the assumptions of Corollary \ref{C:lame}, the Regularity boundary value problem is solvable \cite{D20}. The extrapolation ideas in \cite{S3}, explicitly stated in \cite{D20},  show that the solvability of the Regularity problem yields a further improvement on the range in $L^p$ of solvability of the Dirichlet problem, beyond what one can achieve without Regularity. However, our extrapolation result assuming $p$-ellipticity goes even further 
as soon as the dimension $n=4$. Our $p(4) > 8$, while using the assumption of solvability of Regularity one would only find $p(4)=6$.
  
\smallskip

\noindent{\it Remark 3.}  Regarding the condition \eqref{Cond-lame}, it was observed in \cite{DHM} that the physical constraints
for certain typical materials imply that $\mu>-\frac{2}{n}\lambda$. The constant $K=\mu+\frac2n\lambda$ is the {\it bulk modulus}
and is positive; it is defined as the ratio of the infinitesimal pressure increase to the resulting relative decrease of the volume.
Hence our condition \eqref{Cond-lame} only imposes one additional assumption, namely that 
$$\lambda<(\sqrt{8}+1)\mu\approx 3.828\mu,$$
or alternatively the Poisson ratio $\nu:=\frac{\lambda}{2(\lambda+\mu)}<0.396$. There are many materials where this holds (for example  aluminium, bronze, steel and many other metals, carbon,  polystyrene, PVC, silicate glasses, concrete, etc) \cite{MR}. Some materials where this assumption fails include gold, lead or rubber. For these three materials $\nu$ is near the incompressibility limit ($\nu=\frac12-$) at which  \eqref{LMDP} gives {\rm div}$\,u=0$, i.e., the material is incompressible. Intuitively, since both gold and lead are very soft metals, they behave as liquids under pressure; that is, a pressure in one direction will cause them to change shape and stretch in remaining directions in order to preserve volume. Rubber is nearly incompressible with $\nu\approx 0.49$.

\section{Application to periodic homogenization of elliptic systems}\label{six}

In this section we will consider a family of second order real elliptic systems with rapidly oscillating periodic coefficients, i.e.

\begin{equation}
  (\mathcal{L}_\epsilon u)_\alpha = \partial_h (A^{hk}_{\alpha \beta}(x / \epsilon) \partial_k u^\beta), \ \epsilon > 0,
\end{equation}
on $\mathbb{R}^n$. We further assume the coefficient matrix

\begin{equation}
  A(x) = (A_{\alpha \beta}^{hk} (x)), \ 1 \leq h,k \leq n \text{ and } 1 \leq \alpha,\beta \leq m
\end{equation}
is 1-periodic, i.e. $A(x + y) = A(x)$ for a.e. $x \in \mathbb{R}^n$ and $y \in \mathbb{Z}^n$. We call $A \in \Lambda(C_1, C_2, \tau )$ if $A$ is 1-periodic, $A = A^*$ and satisfies the strong ellipticity condition (Legendre condition):

\begin{equation}
  C_1 |\xi|^2 \leq \langle A(x) \xi , \xi \rangle \leq \frac{1}{C_1} |\xi|^2 \text{ for any } \xi= (\xi_h^\alpha) \in \mathbb{R}^{n \times m} 
\end{equation}
and also satisfies the H\"older continuity condition:

\begin{equation}
  |A(x) - A(y)| \leq C_2 |x - y|^\tau, \ \tau \in (0,1]. 
\end{equation}
In this theory, it is of interest to consider the existence of a uniform estimate for the $L^p$-Dirichlet problem on bounded Lipschitz domains:

\begin{equation}
  \begin{cases}
    \mathcal{L}_\epsilon u_\epsilon = 0 &\text{ on } \Omega, \\
    u_\epsilon = f \in L^p(\partial \Omega; \mathbb{R}^m) &\text{ on } \partial \Omega, \\
    N_a(u_\epsilon) \in L^p (\partial \Omega).
  \end{cases}
\end{equation}
For the case $p = 2$, it was proved in \cite{KS} that there exists a unique solution $u_\epsilon \in C^1 (\Omega; \mathbb{R}^m)$ satisfying 

\begin{equation}
  ||N_a(u_\epsilon)||_{L^2(\partial \Omega)} \leq C ||f||_{L^2(\partial \Omega)},
\end{equation}
where $C$ depends only on $C_1,C_2,\tau$ and the Lipschitz constant of $\Omega$. We will prove the following:

\begin{theorem}\label{Homogen}
  Let $\Omega$ be a bounded Lipschitz domain, suppose $A \in \Lambda(C_1,C_2,\tau)$ and denote $q = \sup \{ A(x) \text{ satisfies condition } \eqref{EllipLp} \}$ . Then the $L^p$-Dirichlet problem 
  
  \begin{equation}
  \begin{cases}
    \mathcal{L}_\epsilon u_\epsilon = 0 &\text{ on } \Omega, \\
    u_\epsilon = f \in L^p(\partial \Omega; \mathbb{R}^m) &\text{ on } \partial \Omega, \\
    N_a(u_\epsilon) \in L^p (\partial \Omega).
  \end{cases}
  \end{equation}
is solvable for $2 < p < \frac{q(n-1)}{n-2}$ . Moreover, there exists $C = C(C_1,C_2,\tau,m,n,p,||A||_{L^\infty})$ such that

\begin{equation}
  ||N_a(u_\epsilon)||_{L^p(\partial \Omega)} \leq C ||f||_{L^p(\partial \Omega)}.
\end{equation}
   
\end{theorem}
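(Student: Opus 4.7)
The strategy is to combine the $L^2$-solvability result of Kenig--Shen with the extrapolation Theorem~\ref{Extrapolation}, exploiting the scale invariance of the pointwise $p$-ellipticity conditions. The key observation is that the strong $p$-ellipticity condition \eqref{EllipLp} is a pointwise condition on the tensor $A(x)$: if $A$ satisfies \eqref{EllipLp} with constant $C>0$ at a.e.\ $x$, then for every $\epsilon>0$ the rescaled tensor $A_\epsilon(x):=A(x/\epsilon)$ satisfies \eqref{EllipLp} at a.e.\ $x$ with the \emph{same} constant $C$, since $x\mapsto x/\epsilon$ is a measure-preserving bijection of the set of points where \eqref{EllipLp} is tested. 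In particular, for every $p<q$, the tensor $A_\epsilon$ is strongly $p$-elliptic uniformly in $\epsilon$.

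By Theorem~\ref{p-ellipticity AT}(i), this pointwise condition implies the integral $p$-ellipticity condition \eqref{eq:pellintp} with the same constant. Hence the quantity $p_0$ defined in Theorem~\ref{Extrapolation}, when computed for the operator $\mathcal{L}_\epsilon$, satisfies $p_0(\mathcal{L}_\epsilon)\ge q$ uniformly in $\epsilon$. Moreover, the operator $\mathcal{L}_\epsilon$ has no lower order term, i.e.\ $B\equiv 0$, so the smallness hypotheses on $B$ in Theorem~\ref{Extrapolation} are automatically satisfied.

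For the base case of the extrapolation, we invoke the $L^2$-solvability result of \cite{KS}: since $A\in\Lambda(C_1,C_2,\tau)$, the $L^2$-Dirichlet problem for $\mathcal{L}_\epsilon$ is uniquely solvable on the Lipschitz domain $\Omega$ with
\[
\|N_a(u_\epsilon)\|_{L^2(\partial\Omega)}\le C\,\|f\|_{L^2(\partial\Omega)},
\]
where $C$ depends only on $C_1,C_2,\tau$ and the Lipschitz character of $\Omega$, but not on $\epsilon$. With Theorem~\ref{Extrapolation} applied to $\mathcal{L}_\epsilon$ at the starting exponent $p=2$, we obtain solvability of the $L^p$-Dirichlet problem for every $p\in[2,q(n-1)/(n-2))$, with a bound of the form
\[
\|N_a(u_\epsilon)\|_{L^p(\partial\Omega)}\le C_p\,\|f\|_{L^p(\partial\Omega)}.
\]

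The main point to verify is that the constant $C_p$ obtained from extrapolation is independent of $\epsilon$. This is where the scale invariance is essential: the interior and boundary Caccioppoli-type and reverse H\"older estimates of Section~3 used in the proof of Theorem~\ref{Extrapolation} depend on the integral $p$-ellipticity constant $\lambda_p$, the $L^\infty$ norm of the coefficients, the constant $K$ controlling $B$ (vacuous here), and the Lipschitz character of $\Omega$. For $A_\epsilon$, all of these are uniform in $\epsilon$: the $L^\infty$ norm equals $\|A\|_{L^\infty}$ and the $p$-ellipticity constants are inherited from those of $A$. Consequently the real-variable machinery of \cite{S1} invoked in the proof of Theorem~\ref{Extrapolation} produces an estimate with constants depending only on $C_1,C_2,\tau,m,n,p$ and $\|A\|_{L^\infty}$, as claimed. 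The anticipated obstacle is merely the bookkeeping required to trace $\epsilon$-independence through the proof of Theorem~\ref{Extrapolation}; no new analytic input beyond the pointwise nature of \eqref{EllipLp} and the uniform $L^2$ bound of \cite{KS} is needed.
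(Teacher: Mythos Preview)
Your proposal is correct and follows essentially the same approach as the paper: invoke the $L^2$ result of \cite{KS} as the base case, observe that the pointwise strong $p$-ellipticity of $A$ is inherited by $A(\cdot/\epsilon)$ with the same constants (hence so is the integral condition via Theorem~\ref{p-ellipticity AT}(i)), apply Theorem~\ref{Extrapolation} with $B\equiv 0$, and trace the $\epsilon$-independence of the extrapolation constant back to the interior and boundary estimates of Section~3. Your write-up is in fact slightly more explicit than the paper's about the scale invariance and the invocation of \cite{KS}.
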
 

\begin{proof}
To obtain the uniform estimate, we first notice that since $A$ is strongly elliptic, from Theorem \ref{p-ellipticity AT}, there exists a small neighborhood of $2$ in which $A$ is strongly $p$-elliptic. For simplicity, we denote this optimal range as $(q',q)$. Moreover, from part (v) of Theorem \ref{p-ellipticity AT}, the value of $q$ depends on $||A||_{L^\infty}$ and this implies $A(x/\epsilon)$ is strongly $p$-elliptic in the same range. Since there are no lower order terms in this setting, Theorem \ref{Extrapolation} can be applied directly and it suffices to show that the constant in the $L^p$ estimate upon extrapolation is independent of $\epsilon$. To see this, from the proof of Theorem \ref{Extrapolation}, we need to show that the constant in \eqref{reverse holder of N} is independent of $\epsilon$. From the interior and boundary estimates in Section 2, this constant depends only on $m,n,C_1,C_2,p$, completing the proof.
\end{proof}

\begin{remark}
  When $m = 1$ or $n = 2,3$, the $L^p$-Dirichlet problem is solvable for $2-\delta < p < \infty$ and for $m \geq 2$ and $n \geq 4$, the extrapolation can be established for $2-\delta < p < \frac{2(n-1)}{n-3}+ \delta$ from \cite{S3}.
\end{remark}

\end{document}